\documentclass[12pt]{article}
\usepackage{amsfonts,epsf,amsmath,amssymb,graphicx,amsthm,fullpage}

\usepackage{graphicx}
\usepackage{subfigure}
\usepackage{caption}

\newtheorem{theorem}{Theorem}[section]
\newtheorem{corollary}[theorem]{Corollary}
\newtheorem{lemma}[theorem]{Lemma}

\newtheorem{conjecture}[theorem]{Conjecture}
\newtheorem{problem}[theorem]{Problem}
\theoremstyle{definition}
\newtheorem{definition}[theorem]{Definition}

\newtheorem{question}[theorem]{Question}
\newtheorem{observation}[theorem]{Observation}

\def\cF{{\mathcal F}}
\def\cS{{\mathcal S}}
\def\NN{{\mathbb N}}
\def\esub{\subseteq}
\def\VEC#1#2#3{#1_{#2},\ldots,#1_{#3}}

\def\Hb{\overline{H}}

\def\Tb{\overline{T}}
\def\cD{\mathcal{D}}
\def\FL#1{\left\lfloor #1\right\rfloor}
\def\CL#1{\left\lceil #1\right\rceil}
\def\FR#1#2{\frac{#1}{#2}}
\def\CH#1#2{\binom{#1}{#2}}
\def\SM#1#2{\sum_{{#1}\in{#2}}}
\def\SE#1#2#3{\sum_{{#1}={#2}}^{#3}}
\def\st{\colon\,}
\def\nul{\varnothing}

\begin{document}

\title{Reconstruction from $k$-decks for graphs with maximum degree 2}
\author{
Hannah Spinoza\thanks{
Department of Mathematics, University of Illinois, Urbana IL 61801, U.S.A;
kolbhr@gmail.com.}\, and
Douglas B. West\thanks{
Departments of Mathematics, Zhejiang Normal University, Jinhua 321004, China,
and University of Illinois, Urbana IL 61801, U.S.A.; west@math.uiuc.edu.
Research supported by Recruitment Program of Foreign Experts, 1000 Talent Plan,
State Administration of Foreign Experts Affairs, China.} 
}

\maketitle

\vspace{-2pc}

\begin{abstract}
The {\it $k$-deck} of a graph is its multiset of induced subgraphs on
$k$ vertices.  We prove that $n$-vertex graphs with maximum degree $2$ have the
same $k$-decks if each cycle has at least $k+1$ vertices, each path component
has at least $k-1$ vertices, and the number of edges is the same.  Using this
for lower bounds, we obtain for each graph with maximum degree at most $2$ the
least $k$ such that it is determined by its $k$-deck.  For the $n$-vertex cycle
this value is $\FL{n/2}$, and for the $n$-vertex path it is $\FL{n/2}+1$.
Also, the least $k$ such that the $k$-deck of an $n$-vertex graph always
determines whether it is connected is at least $\FL{n/2}+1$.

\smallskip
\noindent
{MSC Codes:} 05C60, 05C07\\
{Key words: graph reconstruction, deck, reconstructibility} 
\end{abstract}

\baselineskip16pt

\section{Introduction} \label{sec:intro}
The famous Reconstruction Conjecture of Kelly~\cite{kel1,kel2} and
Ulam~\cite{U} has been open for more than 50 years.  A {\it card} of a graph
$G$ is a subgraph of $G$ obtained by deleting one vertex.  Cards are unlabeled,
so only the isomorphism class of a card is given.  The {\it deck} of $G$ is the
multiset of all cards of $G$.  A graph is {\it reconstructible} if it is
uniquely determined by its deck. 

\begin{conjecture}[The Reconstruction Conjecture; Kelly~\cite{kel1,kel2},
Ulam~\cite{U}]
Every graph having more than two vertices is reconstructible. 
\end{conjecture}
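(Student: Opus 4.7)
The final statement is the Reconstruction Conjecture itself, a problem open for more than half a century; any honest plan must acknowledge that no complete strategy is known. What one can describe is the natural framework within which virtually all positive results sit. The starting point is Kelly's Lemma: the number of subgraphs of $G$ isomorphic to any fixed graph $H$ with $|V(H)|<|V(G)|$ is computable from the deck, because each such copy is counted in a controlled way across the cards obtained by deleting a vertex outside $V(H)$. From Kelly's Lemma one extracts the degree sequence, the number of edges, the number of triangles, and more generally every subgraph count for proper subgraphs of $G$.

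A proof attempt would proceed in two stages. First, use Kelly-type counting and other deck-derivable invariants (the characteristic polynomial, the number of spanning trees, connectedness, bipartiteness, and so on) to restrict $G$ to a small, structured class. Second, within each such class, reconstruct $G$ directly. This is how reconstructibility has been established for trees, disconnected graphs, regular graphs, graphs with maximum degree at most $2$, unit-interval graphs, outerplanar graphs, maximal planar graphs, and separable graphs, among others. The present paper fits the second-stage picture, refining a class that is already known to be reconstructible and pinning down exactly how much local information, in the form of the $k$-deck, is actually needed.

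The main obstacle, and the reason the full conjecture remains open, is that Kelly's Lemma fails at the global scale of $G$ itself: the number of copies of $G$ in its own deck is not computable without knowing $G$. Equivalently, any two graphs with the same deck automatically agree on all proper subgraph counts, so to separate them one must invoke an invariant not expressible as a finite linear combination of subgraph counts, and no such invariant is known. Stockmeyer's infinite families of non-reconstructible tournaments warn that purely counting-based arguments cannot suffice even in principle. A plausible plan would therefore aim at a structural dichotomy: show that a hypothetical counterexample pair $(G,H)$ would have to carry so many forced coincidences on degree sequence, subgraph counts, spectral data, and Kelly-type identities for pairs of deleted vertices that no such pair can exist. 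I would expect the crux to be ruling out subtle, highly symmetric constructions rather than producing a single uniform argument, and this is precisely where every serious attempt to date has stalled.
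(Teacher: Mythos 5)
The statement you were given is the Reconstruction Conjecture itself, which the paper only states as background (attributed to Kelly and Ulam) and does not prove --- it has been open for over fifty years, and the paper's actual results concern $k$-deck reconstruction for graphs of maximum degree $2$. Your proposal correctly recognizes this and offers a survey of the standard framework (Kelly's Lemma, deck-derivable invariants, class-by-class reconstruction) rather than a proof, which is the only honest response; there is no gap to identify because neither you nor the paper claims an argument for the conjecture.
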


We require more than two vertices since both graphs on two vertices have the
same deck.  Graphs in many families are known to be reconstructible; these
include disconnected graphs, trees, regular graphs, and perfect graphs.
Surveys on graph reconstruction
include~\cite{Bondy91,BondyHemminger77,Lauri97,Maccari02}.

Various parameters have been introduced to measure the difficulty of 
reconstructing a graph.  Harary and Plantholt~\cite{HP} defined the
{\it reconstruction number} of a graph to be the minimum number of cards from
its deck that suffice to determine it, meaning that no other graph has the same
multiset of cards in its deck.  All trees with at least five vertices have
reconstruction number $3$ (Myrvold~\cite{Myrvold90}), and almost all graphs have
reconstruction number $3$ (Bollob\'as~\cite{Bol}).  Since $K_{r,r}$ and
$K_{r+1,r-1}$ have $r+1$ common cards, the reconstruction number of an
$n$-vertex graph can be at least as large as $\FR n2+2$
(Myrvold~\cite{Myrvold89}).

Kelly looked in another direction, considering cards obtained by deleting
more vertices.  He conjectured a more detailed version of the Graph
Reconstruction Conjecture.

\begin{conjecture}[Kelly~\cite{kel2}]
For $\ell\in\NN$, there is an integer $f(\ell)$ such that any graph with at
least $f(\ell)$ vertices is reconstructible from its deck of cards obtained by
deleting $\ell$ vertices.
\end{conjecture}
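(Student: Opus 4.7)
The final statement is Kelly's long-standing conjecture, open since the 1950s, so any honest proposal must acknowledge that a complete proof is beyond current methods; what I outline is the canonical plan of attack from which every serious attempt starts.

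The foundation is \emph{Kelly's Lemma}: for any $k$-vertex graph $H$ and $n$-vertex graph $G$ with $k \le n - \ell$, the number $N(H,G)$ of induced copies of $H$ in $G$ is determined by the $\ell$-deck of $G$, via
\begin{equation*}
\binom{n-k}{\ell}\, N(H,G) \;=\; \sum_{D} N(H,D),
\end{equation*}
where $D$ ranges over the multiset of cards in the $\ell$-deck. The identity is a double count: each induced copy of $H$ in $G$ survives in exactly those $\ell$-cards that delete no vertex of the copy, and there are $\binom{n-k}{\ell}$ of these. Iterating, the $\ell$-deck determines the $k$-deck for every $k \le n - \ell$, so the conjecture reduces to producing a threshold $f(\ell)$ beyond which the $(n-\ell)$-deck of any $n$-vertex graph determines the graph up to isomorphism.

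The next step is to harvest from Kelly's Lemma the standard global invariants — number of edges, degree sequence, counts of components of each isomorphism type up to a controllable size, and induced-subgraph frequencies for all small graphs — and then split the analysis by coarse structural features (connected vs.\ disconnected, regular vs.\ irregular, bipartite vs.\ not). Within each class the task becomes: show that as $n$ grows relative to $\ell$, the subgraph-count vector delivered by Kelly's Lemma is rich enough to isolate $G$. The main theorem of the present paper executes exactly this plan inside the restricted class of graphs with maximum degree at most $2$, where path and cycle contributions to each $k$-deck can be tracked exactly and combined with the edge count; any general proof must imitate this pattern across all structural classes simultaneously.

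The hard part, and the reason the conjecture has resisted for more than fifty years, is that the step from \emph{``all induced-subgraph counts up to size $n-\ell$ are known''} to \emph{``the graph is uniquely determined''} already contains the Reconstruction Conjecture as the special case $\ell=1$. Any genuine progress must either exploit $\ell \ge 2$ in an essential way or tackle Ulam's conjecture in passing. The most promising direction, and the one I would pursue as the principal obstacle, is to work in the regime where $n$ is very large relative to $\ell$ and to use counting or entropy methods to bound, and eventually eliminate, the number of non-isomorphic $n$-vertex graphs sharing the same $(n-\ell)$-deck; finding a general structural principle that forces such a bound — even one yielding an astronomical $f(\ell)$ — is the real kernel of the problem.
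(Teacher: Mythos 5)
The statement you were asked about is Kelly's conjecture, which the paper presents precisely as a \emph{conjecture}: it is open, the paper offers no proof of it, and indeed the paper's own contribution (Theorem~\ref{main} and Theorem~\ref{rho}) is a study of a restricted setting (maximum degree $2$, small cards) that is logically independent of settling the conjecture. So there is no proof in the paper to compare your proposal against, and your proposal --- to your credit --- does not actually claim to be a proof. Your statement of Kelly's Lemma and the double-counting identity $\binom{n-k}{\ell}N(H,G)=\sum_D N(H,D)$ is correct, and your observation that the $\ell=1$ instance already contains the full Reconstruction Conjecture matches the paper's remark that the Reconstruction Conjecture is the claim $f(1)=3$.

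The genuine gap is therefore the entire argument: everything after Kelly's Lemma in your outline (``harvest the standard invariants, split into structural classes, show the subgraph-count vector isolates $G$'') is a description of what a proof would need to do, not a proof. In particular, the central step --- exhibiting any finite $f(\ell)$, even an astronomical one, beyond which the $(n-\ell)$-deck determines $G$ --- is exactly what has been open since 1957, and no counting or entropy argument of the kind you gesture at is known to bound the number of graphs sharing a deck. If this were a graded exercise, the correct answer is that the statement cannot currently be proved; your write-up is an honest and accurate survey of the state of the art, but it should not be labeled a proof proposal for the statement, since it establishes none of it.
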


\noindent
The Graph Reconstruction Conjecture is the claim $f(1)=3$ in this conjecture.

A {\it $k$-card} of a graph is an induced subgraph having $k$ vertices.
The \emph{$k$-deck} of $G$, denoted $\cD_k(G)$, is the multiset
of all $k$-cards.  Since each induced subgraph with $k-1$ vertices arises
exactly $n-k+1$ times by deleting one vertex from a member of $\cD_k(G)$,
we have the following.

\begin{observation}\label{k-1}
For any graph $G$, the $k$-deck $\cD_k(G)$ determines the $(k-1)$-deck
$\cD_{k-1}(G)$.
\end{observation}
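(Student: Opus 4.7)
The plan is to recover the multiplicity of each isomorphism class $H$ on $k-1$ vertices in $\cD_{k-1}(G)$ by a double-counting argument applied to the $k$-deck. For a fixed $(k-1)$-vertex graph $H$, I would count the set of pairs $(T,v)$ where $T$ is a $k$-vertex subset of $V(G)$, $v\in T$, and $G[T\setminus\{v\}]\cong H$. This quantity can be read off from $\cD_k(G)$, because for each isomorphism type $F$ of $k$-card the number of vertices of $F$ whose deletion yields a graph isomorphic to $H$ depends only on $F$, so the total count is $\sum_{F}m_F(G)\cdot|\{v\in V(F)\st F-v\cong H\}|$, where $m_F(G)$ is the multiplicity of $F$ in $\cD_k(G)$.

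On the other hand, I would count the same pairs by first choosing a $(k-1)$-subset $S$ with $G[S]\cong H$ and then extending by an arbitrary $v\in V(G)\setminus S$; this gives exactly $n_H(G)\cdot(n-k+1)$ pairs, where $n_H(G)$ is the multiplicity of $H$ in $\cD_{k-1}(G)$. Equating the two counts and dividing by $n-k+1$ recovers $n_H(G)$. One small preliminary step is to observe that $n$ itself is determined by $\cD_k(G)$, since $|\cD_k(G)|=\binom{n}{k}$; this lets me compute the divisor $n-k+1$ (and the case $n=k$ is trivial since then $\cD_k(G)=\{G\}$).

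The main (and really only) point to be careful about is the bookkeeping: each $(k-1)$-subset $S$ with $G[S]\cong H$ is counted once for every vertex outside $S$ that could be added to form $T$, and no repetitions occur because $T$ and $v\in T$ are recorded together. There is no genuine obstacle, because the argument is a straightforward instance of counting incidences between $(k-1)$-subsets and $k$-subsets in two different ways; the only mild subtlety is recognising that both sides depend only on isomorphism types, so that the count on the $k$-deck side is well defined from $\cD_k(G)$ alone.
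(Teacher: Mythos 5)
Your argument is correct and is essentially the paper's own justification, which observes that each induced $(k-1)$-vertex subgraph arises exactly $n-k+1$ times by deleting one vertex from a member of $\cD_k(G)$; you have simply written out this counting in full, including the (harmless but useful) remark that $n$ is recoverable from $|\cD_k(G)|=\binom{n}{k}$. No differences in substance.
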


Thus decks of larger cards provide at least as much information as decks of
smaller cards.  Graphs are ``easier'' to reconstruct if they can be
reconstructed from smaller cards.

\begin{definition}
A graph $G$ is {\it $k$-deck reconstructible} if no other graph has the same
$k$-deck.  Let $\rho(G)$ denote the least $k$ such that $G$ is $k$-deck
reconstructible.
\end{definition}

In light of Observation~\ref{k-1}, it is useful to know what information about
a graph can be reconstructed from the $k$-deck for small fixed $k$.  Such 
information is also available when considering larger $k$.  For example, only
the numbers of edges and vertices are reconstructible from the $2$-deck.  At
the other end, Manvel~\cite{Manvel} proved that if $|V(G)|=n\ge6$, then one can
determine from the $(n-2)$-deck whether or not $G$ is connected, acyclic,
regular, or bipartite.  This has recently been improved in~\cite{SW}, where
the authors showed that connectedness can always be
determined from the $(n-3)$-deck.

For a graph $G$, the maximum degree $\Delta(G)$ is reconstructible from the
$(\Delta(G)+2)$-deck, since some $(\Delta(G)+2)$-card has a vertex of degree
$\Delta(G)$, but no $(\Delta(G)+2)$-card has a vertex of degree $\Delta(G)+1$.
This was strengthened by Manvel:

\begin{theorem}[Manvel~\cite{Manvel}]\label{thm: degreeSequence}\label{manvel}
The degree list of a graph $G$ with maximum degree $\Delta(G)$ is
reconstructible from $\cD_{\Delta(G)+2}$.
\end{theorem}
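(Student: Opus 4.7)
The plan is to set up a triangular linear system from which the degree list can be read off entry by entry. Let $k = \Delta(G)+2$, and for $0 \le j \le \Delta(G)$ let $d_j$ denote the number of vertices of degree $j$ in $G$. First I would recover $n$ from $\cD_k(G)$ via $|\cD_k(G)| = \binom{n}{k}$, and then $\Delta(G) = k-2$ (which also equals the largest degree appearing in any card, since some $k$-set contains the closed neighborhood of a maximum-degree vertex together with one more vertex).

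For each $j$ with $0 \le j \le \Delta(G)$, define
\[
N_j \;=\; \sum_{H \in \cD_k(G)} \bigl|\{w \in V(H) \st \deg_H(w) = j\}\bigr|,
\]
a quantity evidently determined by the deck. The key step is to re-count $N_j$ by summing over pairs (vertex $v \in V(G)$, $k$-set $S \subseteq V(G)$ with $v \in S$): the vertex $v$ has degree exactly $j$ in $G[S]$ precisely when $S$ contains $v$ together with $j$ of its $\deg_G(v)$ neighbors and $k-1-j$ of its $n-1-\deg_G(v)$ non-neighbors. Grouping vertices of $G$ by their degree yields
\[
N_j \;=\; \sum_{d \ge j} d_d \binom{d}{j}\binom{n-1-d}{k-1-j}.
\]

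Now I would solve this system for $d_\Delta, d_{\Delta-1},\ldots,d_0$ in that order. Since $d_d = 0$ for $d > \Delta$, only finitely many terms appear in each equation, and in the $j$-th equation the coefficient of $d_j$ is $\binom{n-1-j}{k-1-j}$, which is positive because $n \ge k$. Starting from $j = \Delta$, the equation reduces to $N_\Delta = d_\Delta(n-1-\Delta)$ and immediately gives $d_\Delta$. Substituting the recovered values into the equation for $N_{j-1}$ (where $d_{\Delta},\ldots,d_{j}$ now appear as known quantities) determines $d_{j-1}$ uniquely, and iterating downward yields the full degree list.

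The main thing to verify is the double-counting identity for $N_j$; once that is established, the system is triangular with nonzero diagonal entries and so is trivially invertible. There is no other real obstacle.
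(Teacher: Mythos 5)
This statement is quoted from Manvel's paper and is not proved in the text you were given, so there is no internal proof to compare against; judged on its own, your argument is a correct and self-contained proof in the standard Kelly-style counting vein. The double-counting identity $N_j=\sum_{d\ge j} d_d\binom{d}{j}\binom{n-1-d}{k-1-j}$ is right (choose $j$ neighbors and $k-1-j$ non-neighbors of a degree-$d$ vertex), each $N_j$ and $n$ are isomorphism-invariant functions of the deck, and since $d_d=0$ for $d>\Delta$ the system is triangular with diagonal entries $\binom{n-1-j}{k-1-j}>0$ (using the implicit standing assumption $n\ge\Delta+2$, without which the deck is empty and nothing can be reconstructed), so back-substitution from $j=\Delta$ downward determines the degree list. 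The one point you should state explicitly rather than leave in a parenthesis: reconstructibility means that any graph $H$ with $\cD_k(H)=\cD_k(G)$ has the same degree list, so you need $\Delta(H)\le k-2$ in order to apply the same triangular solve to $H$; this follows because a vertex of degree at least $k-1$ in $H$ would yield a $k$-card with a vertex of degree $k-1$, whereas no card of $G$ contains one --- exactly the observation the paper records in the paragraph immediately preceding the theorem. With that sentence added, your proof is complete, and it is presumably close in spirit to Manvel's original argument, which this paper only cites.
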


Manvel~\cite{Manvel} also showed that the result is sharp in a strong sense;
the maximum degree is not always determined by $\cD_{\Delta(G)+1}(G)$.  Let
$G_k$ be the forest $\SE i0{\FL{k/2}} \CH k{2i}K_{1,k-2i}$ (that is,
$\CH k{2i}$ stars with $k-2i$ edges for $0\le i\le \FL{k/2}$).  Also, let
$H_k=\SE i0{\FL{(k-1)/2}} \CH k{2i+1} K_{1,k-2i-1}$.  Note that $\Delta(G_k)=k$
and $\Delta(H_k)=k-1$.  Nevertheless, the two graphs have the same $k$-deck,
and hence $\Delta(H)$ cannot always be determined from $\cD_{\Delta(H)+1}(H)$.

With Theorem~\ref{manvel}, we already recognize from the $k$-deck whether a
graph has maximum degree $2$ (when $k\ge4$).  However, we will show that much
larger cards are needed to guarantee determining whether a graph with maximum
degree $2$ is connected.  In Problem 11898 of the American Mathematical
Monthly, Richard Stanley posed a question that begins to suggest the difficulty
of reconstructing $2$-regular graphs from their $k$-decks.

\begin{problem}[Stanley \cite{Stanley}]\label{stanley}
Let $n$ and $k$ be integers, with $n\ge k \ge 2$. Let $G$ be a graph with $n$
vertices whose components are cycles of length greater than $k$. Let $f_k(G)$
be the number of $k$-element independent sets of vertices of $G$. Show that
$f_k(G)$ depends only on $k$ and $n$. 
\looseness -1
\end{problem}

Let $s(G,H)$ denote the number of induced subgraphs of $G$ isomorphic to $H$.
Graphs $G$ and $G'$ have the same $k$-deck if and only if $s(G,H)=s(G',H)$ for
all $H$ with $k$ vertices.  In the language of reconstruction, Stanley's
problem asserts $s(G,\overline{K}_k)=s(G',\overline{K}_k)$ for $n$-vertex
$2$-regular graphs $G$ and $G'$ whose components have length greater than $k$,
where $K_k$ is the complete graph with $k$ vertices and $\Hb$ denotes the
complement of $H$.  Stanley's proposed solution of Problem~\ref{stanley} used
generating functions.  Our proof and generalization are bijective and relate to
reconstruction.

Problem~\ref{stanley} considers only subgraphs having no edges.  We will prove
the same conclusion for all subgraphs with $k$ vertices.  That is, $n$-vertex
$2$-regular graphs whose components have more than $k$ vertices all have the
same $k$-deck.  Our technique of proof further generalizes to graphs with
maximum degree $2$.

\begin{theorem}\label{main} 
Let $G$ and $G'$ be graphs with maximum degree $2$ having the same number
of vertices and the same number of edges.  If every component in each graph is
a cycle with more than $k$ vertices or a path with at least $k-1$ vertices,
then $\cD_k(G)=\cD_k(G')$.
\end{theorem}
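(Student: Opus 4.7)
The plan is to prove $s(G,H)=s(G',H)$ for every $k$-vertex graph $H$, where $s(G,H)$ counts induced copies of $H$ in $G$. Because $\Delta(G),\Delta(G')\le 2$ and every cycle component has more than $k$ vertices, every $k$-card is automatically a disjoint union of paths, so only such $H$ matter. First I would derive closed-form expressions for $s(X,H)$ when $X$ is a single path $P_m$ or cycle $C_n$, by the standard ``runs-and-gaps'' decomposition. For $H$ a path-forest with $r$ components and $e_\ell$ copies of $P_\ell$ (so $k=\sum_\ell \ell e_\ell$),
\[
s(P_m,H)=\binom{m-k+1}{r}\frac{r!}{\prod_\ell e_\ell!},\qquad s(C_n,H)=n\binom{n-k-1}{r-1}\frac{(r-1)!}{\prod_\ell e_\ell!}\quad(r\ge 1),
\]
with $s(X,\varnothing)=1$; the cycle formula follows from the double-counting identity $(n-k)\,s(C_n,H)=n\,s(P_{n-1},H)$, proved by pairing $(S,u)$ with $u\in V(C_n)\setminus S$. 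For a disjoint union $G=\bigsqcup_{i=1}^p X_i$ this yields $s(G,H)=\sum_{H_1\sqcup\cdots\sqcup H_p=H}\prod_i s(X_i,H_i)$.

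Next I would show that $s(G,H)$ is invariant under four local moves, each preserving $|V(G)|$, $|E(G)|$, and the component-size hypotheses: (M1) merging cycles, $C_a+C_b\leftrightarrow C_{a+b}$ when $a,b>k$; (M2) shifting a vertex between two paths, $P_a+P_b\leftrightarrow P_{a-1}+P_{b+1}$ when $a\ge k$ and $b\ge k-1$; (M3) shifting a vertex between a cycle and a path, $C_a+P_b\leftrightarrow C_{a-1}+P_{b+1}$ when $a\ge k+2$ and $b\ge k-1$; and (M4) dissolving a cycle into a path, $C_n+P_m\leftrightarrow P_{n+m}$ when $n>k$ and $m\ge k-1$. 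Substituting the formulas above into the component-product $s(G,H)$ and summing over multiset decompositions $\mu_a\sqcup\mu_b=\mu$ of the part-sizes of $H$ across the two components involved, each invariance reduces to a Chu--Vandermonde-style binomial identity; the boundary cases where one sub-multiset is empty must be checked separately since the cycle formula requires $r\ge 1$. Any two graphs satisfying the theorem's hypotheses can then be connected by a sequence of these moves: use (M1) to consolidate all cycles, (M4) to dissolve any remaining cycle into a path (which requires at least one path component to splice onto, guaranteed by the shared value of $|V(G)|-|E(G)|$ equaling the number of path components, except in the all-cycle case where consolidation alone suffices), and (M2) to redistribute vertices among the resulting paths to match $G'$.

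The main obstacle is the identity for (M4), the cycle-dissolution statement $\mathcal{F}_{C_n}(z,x)\,\mathcal{F}_{P_m}(z,x)\equiv\mathcal{F}_{P_{n+m}}(z,x)\pmod{z^{k+1}}$, where $\mathcal{F}_X(z,x)=\sum_\mu s(X,H(\mu))z^{|\mu|}\prod_\ell x_\ell^{e_\ell}$. This is most naturally proved bijectively: given a $k$-subset $S$ of $C_n\sqcup P_m$, the hypothesis $n>k\ge |S\cap V(C_n)|$ guarantees an unchosen vertex $u$ in $C_n$, so one may cut $C_n$ at $u$ and splice the resulting path onto $P_m$ to obtain a $k$-subset of $P_{n+m}$ with the same induced subgraph. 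Making the splice canonical and genuinely reversible---that is, enabling recovery of $u$ from the target subset---is the delicate step and requires care about how the two paths are joined at the splice point; analogous but simpler bijective or binomial-identity arguments handle (M1), (M2), and (M3), each corresponding to exchanging a local edge pair while rematching the affected $k$-subsets.
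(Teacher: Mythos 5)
Your outer scaffolding is correct and is essentially the same reduction the paper uses: the closed forms $s(P_m,H)=\binom{m-k'+1}{r}\,r!/\prod_\ell e_\ell!$ and $s(C_n,H)=n\binom{n-k'-1}{r-1}(r-1)!/\prod_\ell e_\ell!$ (with $k'=|V(H)|$) are right, the convolution over splittings of $H$ is right, and reducing the theorem to two-component moves plus a connectivity-of-moves argument parallels Lemma~\ref{samedeck} together with facts (1)--(3). The genuine gap is that the move-invariance identities themselves---which are the entire content of Theorem~\ref{main}, namely the paper's Lemma~\ref{lem: cycle path}, Theorem~\ref{thm: paths k recon}, and the $2$-regular case---are asserted rather than proved. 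Calling them ``Chu--Vandermonde-style'' understates what is needed: if you group the convolution by the profile (number of vertices, number of components) sent to each side, the equality is not class-by-class. For example, with $H=2P_1$ and $k=2$, the split contributions for $P_4+P_1$ are $0+4+3$ and for $P_3+P_2$ they are $0+6+1$; the totals agree (both $7$) but no term-by-term or single-Vandermonde collapse does the job, and the weights attached to each profile depend on the multiset structure of $H$, so the cancellation is global and must be argued for every $H$ at once.

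Moreover, the identities are false just outside the stated ranges (e.g.\ $s(P_5,2P_1)=6\neq 7=s(P_4+P_1,2P_1)$), so no purely formal manipulation of your closed forms can succeed: the hypotheses $q\ge k+1$, $r\ge k-1$ must enter somewhere, and with your formulas the only place they can enter is the precise regime in which the binomial expression equals the true count ($m\ge |V(H_i)|-1$ for a path host, $n\ge |V(H_i)|+1$ for a cycle host). Your proposal never isolates this; the one boundary issue you do flag (the empty sub-multiset, where the cycle formula needs $r\ge1$) is not the real difficulty. Finally, for (M4) you yourself identify the canonical, reversible cut-and-splice as ``the delicate step'' and do not construct it; that step is exactly the paper's Lemma~\ref{lem: cycle path}, whose proof in turn rests on the position-independence statement of Lemma~\ref{lem: paths k recon} (that $s'(P_n,L^m)$ is the same for every specified vertex in a middle window, proved by a careful induction). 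So as written this is a plausible program with correct bookkeeping, but the hard core of the theorem is precisely what is left unproven; it could be completed either by proving the convolution identities as polynomial identities in the component lengths and then invoking the formula-equals-count regime, or by supplying the splice bijection, but neither is done.
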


The essence of the theorem, and in fact the way we prove it, is what it says
for graphs with one or two components.  Let $G+H$ denote the disjoint union of
graphs $G$ and $H$, and let $C_n$ and $P_n$ denote the $n$-vertex cycle
and path.  The theorem includes

\smallskip
(1) $\cD_k(C_{q+r})=\cD_k(C_q+C_r)$ if $q,r\ge k+1$,

(2) $\cD_k(P_{q+r})=\cD_k(C_q+P_r)$ if $q\ge k+1$ and $r\ge k-1$, and

(3) $\cD_k(P_{q-1}+P_r)=\cD_k(P_q+P_{r-1})$ if $q,r\ge k$.
\smallskip

\noindent
These statements yield the following result.

\begin{corollary}\label{maincor}
For $n\ge3$, the least $k$ such that connectedness of an $n$-vertex graph $G$
can always be determined from its $k$-deck is at least $\FL{n/2}+1$ (even when
given $\Delta(G)=2$).  Furthermore, $\rho(P_n)=\FL{n/2}+1$ and
$\rho(C_n)=\FL{n/2}$ when $n\ge6$.
\end{corollary}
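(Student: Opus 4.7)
The plan is to pair each half of the corollary with a matching counting argument, using Theorem~\ref{main} for the lower bounds and counting induced copies of $P_k$ for the upper bounds.

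First, for the lower bounds, set $k=\FL{n/2}$ in part~(2) of Theorem~\ref{main} with $q=\FL{n/2}+1$ and $r=\CL{n/2}-1$. These satisfy $q\ge k+1$ and $r\ge k-1$, so $\cD_k(P_n)=\cD_k(C_q+P_r)$. Since $P_n$ is connected while $C_q+P_r$ is not, and both have maximum degree $2$, this simultaneously shows that the $\FL{n/2}$-deck cannot always detect connectedness (even when $\Delta=2$) and that $\rho(P_n)>\FL{n/2}$. Next, setting $k=\FL{n/2}-1$ in part~(1) with $q=\FL{n/2}$ and $r=\CL{n/2}$ (both $\ge k+1$) gives $\cD_k(C_n)=\cD_k(C_q+C_r)$, so $\rho(C_n)>\FL{n/2}-1$.

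For the matching upper bounds, any $G'$ sharing the $k$-deck with $P_n$ (with $k=\FL{n/2}+1\ge 4$ for $n\ge 6$) has the same degree list by Theorem~\ref{manvel}, so $G'=P_p+\sum C_{c_i}$ with $p\ge 2$ and $p+\sum c_i=n$. For $n\ge 8$ the same reasoning handles $C_n$ with $k=\FL{n/2}\ge 4$; for $n\in\{6,7\}$ (where $k=3$ and Manvel is just out of reach), the $3$-deck still determines $|E(G')|$, excludes triangles, and fixes $\sum_v\binom{d(v)}{2}$ via the induced $P_3$-count, so convexity of $\binom{d}{2}$ forces $G'$ to be $2$-regular and triangle-free, hence a union of cycles summing to $n$. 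In either case, count induced copies of $P_k$ in $G'$: a $C_c$ contributes $c$ if $c>k$ and $0$ otherwise, while a $P_p$ contributes $\max(p-k+1,0)$. Matching this with the known count $n$ in $C_n$ or $n-k+1$ in $P_n$, together with $p+\sum c_i=n$, forces every cycle in $G'$ to have more than $k$ vertices. Since $2(k+1)>n$ in both settings, no two cycles of size $>k$ fit, nor does one such cycle fit alongside a path of length $\ge k$, so $G'=C_n$ or $G'=P_n$ respectively.

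The main obstacle is the upper bound. The $P_k$-count equation itself is short, but care is needed at the boundary cycle sizes $c_i=k$ (which contribute $0$) and $c_i=k+1$ (which contribute $c_i$), and a small separate argument is needed to supply $2$-regularity in the cases $n\in\{6,7\}$ where Theorem~\ref{manvel} cannot be applied directly.
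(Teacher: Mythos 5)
Your proposal follows essentially the same route as the paper: parts (1) and (2) of Theorem~\ref{main} (via Lemma~\ref{samedeck}-free direct application) for the lower bounds, Theorem~\ref{manvel} plus counting induced copies of $P_k$ for the upper bounds, and the edge-count/incidence-count convexity argument when $k=3$. Your lower-bound pairs are fine (the paper uses $C_{\CL{n/2}+1}+P_{\FL{n/2}-1}$; your $C_{\FL{n/2}+1}+P_{\CL{n/2}-1}$ works equally well, except that at $n=3$ it degenerates to $C_2$, a trivial case to patch), and your handling of $n\in\{6,7\}$ is the paper's argument in slightly different words.

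There is one under-justified step, in the upper bound for $P_n$. Granting that every cycle of $G'=P_p+\sum C_{c_i}$ has more than $k$ vertices (true, though in the path case this already needs the small subcase analysis you allude to: if $p\le k-2$ the big-cycle total would have to equal $n-k+1=\CL{n/2}$, impossible since each big cycle has at least $k+1=\FL{n/2}+2$ vertices), your stated reasons --- no two big cycles fit, and no big cycle fits next to a path on at least $k$ vertices --- do not yet yield $G'=P_n$: they leave open $G'=C_c+P_p$ with $c>k$ and $2\le p\le k-1$, which for $n\ge8$ is not excluded by vertex counts alone (e.g.\ $C_7+P_3$ against $P_{10}$ with $k=6$). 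This configuration is killed by the very count you set up, since then $s(G',P_k)=c\ge k+1=\FL{n/2}+2>\CL{n/2}=n-k+1$; this is in effect how the paper finishes, comparing $n-l$ copies of $P_k$ in $P_l+C_{n-l}$ (with $l<k-1$) against the $n-k+1$ copies in $P_n$. With that one line added, the argument is complete.
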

\begin{proof}
By (2), $\cD_k(P_n)=\cD_k(C_{\CL{n/2}+1}+P_{\FL{n/2}-1})$ when $k\le\FL{n/2}$.
This proves the claim about connectedness and also $\rho(P_n)\ge\FL{n/2}+1$.

Consider $\cD_k(P_n)$ with $k={\FL{n/2}+1}$.  If $n\ge6$, then $k\ge4$, and by
Theorem~\ref{manvel} we can reconstruct the degree list.  The components of
any reconstruction $G$ are cycles and one path.  Since the $k$-deck has no
cycle, $G$ can only have one cycle, and its length must exceed $k$.  Now the
path component has fewer than $\FL{n/2}-1$ vertices.  In $\cD_k(P_n)$, there
are $n-k+1$ copies of $P_k$.  However, when $l<k-1$, in $\cD_k(P_l+C_{n-l})$
there are $n-l$ copies of $P_k$, which is larger than in $P_n$.  Hence the deck
differs from $\cD_k(P_n)$ unless $G=P_n$.

By (1), $\cD_k(C_n)=\cD_k(C_{\CL{n/2}}+C_{\FL{n/2}})$ when $k<\FL{n/2}$.
Suppose $k=\FL{n/2}$.  If $n\ge8$, then $\FL{n/2}\ge4$ and by
Theorem~\ref{manvel} we can reconstruct the degree list.  Any $2$-regular graph
other than $C_n$ has a cycle of length at most $\FL{n/2}$, and this can be seen
in the $\FL{n/2}$-deck.

For $n\in\{6,7\}$, reconstruction of $C_n$ from the $3$-deck requires a
different argument.  We know the number of edges of any reconstruction $G$
from the $2$-deck, and we know the number of incidences (corresponding to edges
in the line graph) from the $3$-deck.  This yields
$\SM v{V(G)} \CH{d(v)}2 = n=\SM v{V(G)}\FR{d(v)}2$.  Now it is a standard
exercise by convexity that $G$ is $2$-regular.  Again a cycle will appear in
the $3$-deck if $G\ne C_n$.
\end{proof}

When $n=5$, the graphs $P_5$ and $C_4+P_1$ have the same $3$-deck, so the
condition $n\ge6$ in Corollary~\ref{maincor} cannot be weakened.  There are
also three pairs of $7$-vertex graphs that have the same $4$-deck, but all six
graphs are connected.  Possibly the threshold $k\ge\FL{n/2}+1$ for guaranteed
recognizability of connectedness is sharp when $n\ge6$.

\begin{question}
For $n\in\NN$, what is the least $k$ such that for every $n$-vertex graph $G$,
it can be determined from $\cD_k(G)$ whether $G$ is connected?  In particular,
does $\FL{n/2}+1$ suffice when $n\ge6$?  Does $n-4$ suffice?
\end{question}

N\'ydl~\cite{Nydl} proved that for any $n_0\in\NN$ and $0<q<1$, there exist
nonisomorphic graphs of some order $n$ larger than $n_0$ that have the
same $\FL{qn}$-deck.  However, connectedness is much less information to
request than the isomorphism class, and it remains possible that
$\FL{n/2}+1$ is a threshold for $k$ such that $\cD_k(G)$ always determines
whether $G$ is connected.

Sections 2 and 3 are devoted to the proof of Theorem~\ref{main}, which via
facts (1,2,3) yield lower bounds on $\rho(G)$ whenever $\Delta(G)=2$.
In Section 4 we prove that these lower bounds are optimal, giving procedures
to reconstruct $G$ from its $\rho(G)$-deck in all cases.  Here we give only
a simplified statement of the result.  The parameter $\epsilon'$ in this
statement depends on which paths are components of $G$, as detailed in
Theorem~\ref{rho}.

\begin{theorem}
If $\Delta(G)=2$, then $\rho(G)=\max\{\FL{m/2}+\epsilon,m'+\epsilon'\}$,
where $m$ is the number of vertices in a largest component $H$ of $G$,
$m'$ is the number of vertices in a largest component of $G-V(H)$ (possibly
$m'=0$), $\epsilon$ is $1$ if $G$ has $P_m$ as a component and otherwise
$0$, and $\epsilon'\in\{0,1,2\}$.
\end{theorem}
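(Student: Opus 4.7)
The plan is to prove both a lower bound $\rho(G)\ge\max\{\FL{m/2}+\epsilon, m'+\epsilon'\}$ and a matching upper bound. The formula is a maximum of two quantities, and correspondingly the lower bound comes from two distinct families of $k$-deck-equivalent alternative graphs, one modifying the largest component $H$ of $G$ and one modifying the second-largest.

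For the lower bound from $H$, I would apply Theorem~\ref{main} together with the swap identities (1), (2), (3). When $H=C_m$, swap (1) replaces $H$ by $C_{\CL{m/2}}+C_{\FL{m/2}}$ and yields a graph $G'\not\cong G$ with the same $k$-deck whenever $k<\FL{m/2}$; when $H=P_m$, swap (2) replaces $H$ by $C_{\CL{m/2}+1}+P_{\FL{m/2}-1}$ and works whenever $k\le\FL{m/2}$. Because $H$ is the largest component, the hypotheses of Theorem~\ref{main} on the remaining components of $G$ are automatically satisfied. This establishes the lower bound $\FL{m/2}+\epsilon$. The $m'+\epsilon'$ term is obtained by modifying the second-largest component together with either $H$ or a third component of $G$: one can reverse (1) or (2) to merge a component of size $m'$ into a larger one, or apply (3) to permute the lengths of two path components. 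A short case analysis on the types of the two largest components (cycle versus path) and on whether a third component of sufficient size is present determines which swap is available, and hence the precise value $\epsilon'\in\{0,1,2\}$.

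For the upper bound, set $k=\max\{\FL{m/2}+\epsilon, m'+\epsilon'\}$. In every nontrivial case $k\ge4$, so Theorem~\ref{manvel} recovers the degree list of $G$ from $\cD_k(G)$; this gives the number of isolated-vertex components, the number of path components of length at least $2$, the total vertex count, and the total edge count. To determine the individual component lengths I would use two facts. First, for each $\ell\le k$ the number of induced subgraphs of $G$ isomorphic to $C_\ell$ equals the number of cycle components of length exactly $\ell$, because $\Delta(G)=2$ forces every proper induced subgraph of a cycle component to be a disjoint union of paths; this count is read directly from $\cD_k(G)$. Second, for each $\ell\le k$ the number of induced $P_\ell$ subgraphs is a known linear combination of the multiplicities of components of length at least $\ell$ (a path $P_j$ contributes $j-\ell+1$ for $j\ge\ell$, and a cycle $C_j$ contributes $j$ for $j>\ell$). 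Combined with the cycle counts from the first fact, this yields a triangular system whose unknowns are the multiplicities of path components, solvable from the largest size downward.

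The main obstacle is that the first fact detects only cycles of length at most $k$, so if $G$ has a cycle of length greater than $k$ (which must then be the largest component $H$) its exact length must be recovered from the linear system of $P_\ell$ counts; and similarly the second-largest component, if close in size to $k$, is the most delicate to pin down. The thresholds $\FL{m/2}+\epsilon$ and $m'+\epsilon'$ are chosen so that the only alternative graphs consistent with the $P_\ell$ counts, cycle counts, degree list, and total edge count are precisely the lower-bound constructions above, which the threshold $k$ is just large enough to rule out. Completing the case analysis by the types of the two largest components will simultaneously verify the upper bound and pin down the value of $\epsilon'$.
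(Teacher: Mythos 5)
Your lower bound is essentially the paper's argument (swaps (1)--(3) plus Lemma~\ref{lem: two components}), though you leave the case analysis that pins down $\epsilon'$ unexecuted. The upper bound, however, has two genuine gaps. First, the claim ``in every nontrivial case $k\ge4$'' is false: many graphs with $\Delta(G)=2$ have $\max\{\FL{m/2}+\epsilon,m'+\epsilon'\}=3$ (e.g.\ $C_6$, $C_7$, $2C_3$, $C_6+P_1$), and for $k=3$ Manvel's theorem (which needs $k\ge\Delta+2=4$) gives you nothing; in fact the maximum degree is \emph{not} always determined by the $3$-deck -- $C_m+P_1$ has the same $3$-deck as a tree with a degree-$3$ vertex -- which is exactly why the paper needs Lemma~\ref{deg2} and the exceptional family $\cF$, where $\rho(G)=4$ exceeds the formula. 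Your plan cannot be patched at $k=3$ without an analysis of this kind, since the exceptions are real.

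Second, the decisive step of the upper bound is asserted rather than proved. A reconstruction $H$ with $\cD_k(H)=\cD_k(G)$ is not known in advance to have at most one component with more than $k$ vertices, nor to respect your intended sizes, so your triangular system in the counts $s(G,P_\ell)$, $s(G,C_\ell)$ only settles the components of size at most $k$; the whole difficulty is the (at most one) component above $k$, and the sentence ``the thresholds are chosen so that the only alternative graphs consistent with the counts are the lower-bound constructions'' is precisely what must be demonstrated. The paper does this by reducing to the components with more than $k$ vertices (Lemma~\ref{kplus}), recovering the number of path components with at least $k-1$ vertices via the identity $s(G,P_{k-1})-s(G,P_k)-k\,s(G,C_k)$ (Lemmas~\ref{k1count}--\ref{k1recon}), and then invoking Lemmas~\ref{nopath} and \ref{onepath}, where the inequalities $m\le 2k+1$ (largest component a cycle) and $m\le 2k-1$ (largest component a path) -- equivalent to $k\ge\FL{m/2}+\epsilon$ -- together with $k\ge m'$ are exactly what forces $s(\cdot,P_k)$ to determine the unique large component and rules out, say, splitting a long cycle into two cycles each longer than $k$. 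Some argument playing this role is missing from your proposal.
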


In particular, if $G$ is $2$-regular, then the full statement yields
$\epsilon'=0$ and $\rho(G)=\max\{\FL{m/2},m'\}$.

\section{Common $k$-Decks for Linear Forests}

A useful technical lemma implies that when two graph have the same $k$-deck,
taking the disjoint union of either with a third graph again yields two graphs
with the same $k$-deck.  This will allow us to change one or two components
of a graph while keeping the rest of the graph unchanged.  Note that $G[X]$
denotes the subgraph of $G$ induced by a vertex subset $X$.

\begin{lemma}\label{lem: two components}\label{samedeck}
If $G$, $G'$, and $H$ are graphs, then $\cD_k(G)=\cD_k(G')$ if and only if
$\cD_k(G+H)=\cD_k(G'+H)$.
\end{lemma}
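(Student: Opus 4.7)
The plan is to use the natural ``convolution'' identity expressing induced subgraph counts of $G+H$ in terms of those of $G$ and $H$. Each $k$-subset $S$ of $V(G+H)$ splits uniquely as $S=A\cup B$ with $A\esub V(G)$ and $B\esub V(H)$, and $(G+H)[S]=G[A]+H[B]$ because no edges of $G+H$ cross between $V(G)$ and $V(H)$. Grouping $k$-subsets by the isomorphism types of $G[A]$ and $H[B]$ yields, for every $k$-vertex graph $F$,
\[
s(G+H,F)=\SE i0k \sum_{F_1,F_2} s(G,F_1)\,s(H,F_2)\,[\,F_1+F_2\cong F\,],
\]
where the inner sum runs over isomorphism classes with $|V(F_1)|=i$ and $|V(F_2)|=k-i$, and $[\,\cdot\,]$ is the Iverson bracket. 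The unique term with $|V(F_1)|=k$ is $F_1\cong F$ with $F_2=\varnothing$, contributing $s(G,F)\cdot s(H,\varnothing)=s(G,F)$.

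For the forward direction, $\cD_k(G)=\cD_k(G')$ forces $|V(G)|=|V(G')|$ (the decks have common cardinality $\CH{|V(G)|}{k}$), so iterated application of Observation~\ref{k-1} gives $\cD_j(G)=\cD_j(G')$ for every $j\le k$; that is, $s(G,F_1)=s(G',F_1)$ whenever $|V(F_1)|\le k$. Substituting into the identity yields $s(G+H,F)=s(G'+H,F)$ for every $k$-vertex $F$. For the reverse direction, I would use strong induction on $k$. The base $k=0$ is immediate. For the inductive step, assume $\cD_k(G+H)=\cD_k(G'+H)$; Observation~\ref{k-1} gives $\cD_j(G+H)=\cD_j(G'+H)$ for all $j<k$, and the induction hypothesis then yields $\cD_j(G)=\cD_j(G')$, i.e.\ $s(G,F_1)=s(G',F_1)$ whenever $|V(F_1)|<k$. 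Subtracting the identity for $G'$ from that for $G$ at any $k$-vertex $F$, every term with $|V(F_1)|<k$ cancels, leaving $0=s(G,F)-s(G',F)$, as required.

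The main obstacle is really just recognizing that the identity is ``triangular'' in $|V(F_1)|$: the top layer, $F_2=\varnothing$, contributes $s(G,F)$ with coefficient $1$ (using $s(H,\varnothing)=1$), while all lower layers are controlled by smaller decks of $G$ via Observation~\ref{k-1}. Once this structure is in place, both implications collapse to substitution in the identity and a one-step inversion. Edge cases (e.g.\ $|V(G)|<k$, so $\cD_k(G)=\nul$, or $H$ itself empty) are handled by inspection under the standard convention that the vertex count is fixed.
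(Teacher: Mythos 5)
Your proof is correct (modulo the same degenerate convention the paper itself needs, namely that $k$-decks are only compared for graphs with at least $k$ vertices, so that the deck determines the vertex count and, via Observation~\ref{k-1}, all smaller decks), but it takes a genuinely different route from the paper. The paper proves the forward direction bijectively: equality of $k$-decks gives a bijection $g$ between labeled induced subgraphs of $G$ and $G'$ on at most $k$ vertices pairing isomorphic subgraphs, and $U\mapsto g(G[X])+H[Y]$ with $X=U\cap V(G)$, $Y=U\cap V(H)$ transports this to a deck-preserving bijection for $G+H$ versus $G'+H$; the reverse direction is an induction on $k+r$, where $r$ is the number of components of the card $X$, with the connected case handled by $s(G,X)=s(G+H,X)-s(H,X)$ and the disconnected case by splitting the components of $X$ between $H$ and $G$. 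You instead prove both directions from a single convolution identity $s(G+H,F)=\sum s(G,F_1)\,s(H,F_2)\,[F_1+F_2\cong F]$, summed over ordered pairs of isomorphism classes, exploiting its triangular structure in $|V(F_1)|$: substitution gives the forward implication, and induction on $k$ with cancellation of the lower layers inverts it for the reverse implication. Your formulation has a small technical advantage: indexing by isomorphism classes with an Iverson bracket automatically handles cards having repeated isomorphic components, bookkeeping that the paper's sum over subsets $T\subseteq[r]$ of component indices must absorb implicitly. What the paper's route buys is a more transparent, constructive forward direction and, in the converse, an explicit use of the fact that a connected card of $G+H$ lies entirely within $G$ or within $H$ -- a fact your argument never needs.
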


\begin{proof}
Given a graph $F$, let $\cS_k(F)$ denote the set of labeled induced subgraphs
with at most $k$ vertices.  If $\cD_k(G)=\cD_k(G')$, then there is a bijection
$g$ from $\cS_k(G)$ to $\cS_k(G')$ that pairs isomorphic subgraphs.
It suffices to find such a bijection $h$ from $\cS_k(G+H)$ to $\cS_k(G'+H)$.
Given a set $U\esub V(G)\cup V(H)$, let $X=U\cap V(G)$ and $Y=U\cap V(H)$.
Note that $|X|,|Y|\le k$.  Hence we may define $h(U) =g(G[X])+H[Y]$.  In
fact, $h$ is a bijection, and $G[X]+H[Y] \cong g(G[X])+H[Y]$, so
$\cD_k(G+H)=\cD_k(G'+H)$.

Conversely, suppose that $\cD_k(G+H)=\cD_k(G'+H)$.  By Observation 1.3, we also
have $\cD_j(G+H)=\cD_j(G'+H)$ for $j\le k$.  Let $X$ be a graph with $k$
vertices and $r$ components.  We claim $s(G,X)=s(G',X)$, by induction on $k+r$.
If $r=1$, then ${s(G,X)}={s(G+H,X)-s(H,X)}={s(G'+H,X)-s(H,X)}={s(G',X)}$.
Let $[r]=\{1,\ldots,r\}$.  For $r>1$, let $\VEC X1r$ be the components
of $X$.  For $T\esub [r]$, let $X_T$ denote the disjoint union of
$\{X_i\st i\in T\}$, and let $\Tb=[r]-T$.  Using the induction hypothesis, we
compute
\begin{align*}
s(G,X)&=s(G+H,X)-\sum_{\nul\ne T\esub[r]}s(H,X_T)s(G,X_{\Tb})\\
&=s(G'+H,X)-\sum_{\nul\ne T\esub[r]}s(H,X_T)s(G',X_{\Tb})
~=~s(G',X).
\end{align*}
Thus $\cD_k(G)=\cD_k(G')$.
\end{proof}

We will use this lemma in both directions.  In one direction, it tells us
that any lower bound on $\rho(G)$ is also a lower bound on $\rho(G+H)$.
In the other, it tells us that when two graphs with the same $k$-deck have a
common component, deleting the shared component leaves two smaller graphs with
the same $k$-deck.

When we consider only graphs where every cycle has length larger than $k$,
every $k$-card is a linear forest, meaning a disjoint union of paths.  It will
be simpler to prove the equal-deck result first for linear forests.
To discuss linear forests precisely, we introduce helpful notation.

\begin{definition}
Let $L$ denote a list $\VEC\ell1p$ of distinct positive integers, let $m$
denote $\VEC m1p$, and let $L^m$ denote the linear forest having $m_i$
components isomorphic to $P_{\ell_i}$, for $1\le i\le p$.  Let $L^m_i$ denote
the linear forest obtained from $L^m$ by deleting a component isomorphic to
$P_{\ell_i}$, and let $L^m_{i,j}$ denote the result of deleting components
isomorphic to $P_{\ell_i}$ and $P_{\ell_j}$ (we allow $i=j$ when $m_i\ge2$).
Again $s(G,H)$ is the number of induced subgraphs of $G$ isomorphic to $H$,
and let $s'(G,H)$ be the number of induced subgraphs of $G$ isomorphic to $H$
in which a specified vertex of $G$ is used as an isolated vertex in $H$.
\end{definition}

We consider $s'(G,H)$ only when $H$ has an isolated vertex.  The vertex
specified in $G$ does not appear in the notation $s'(G,H)$, because we will
prove next that under appropriate conditions the value is the same for a range
of vertices.  For the remainder of this section, let the vertices of $P_n$ be
$\VEC w1n$ in order.

\begin{lemma}\label{lem: paths k recon}
Let $L^m$ be a linear forest with $k$ vertices.  For each specified vertex
$w_h$ such that $k\le h\le n-k+1$, the quantity $s'(P_n,L^m)$ has the same
value.
\end{lemma}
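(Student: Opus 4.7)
My plan is to reinterpret $s'(P_n,L^m)$ as a count of induced subgraphs in a disjoint union of two paths, and then to show that this count depends only on the total number of vertices of the two paths.

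If $S\subseteq V(P_n)$ induces $L^m$ with $w_h$ playing the role of an isolated vertex, then $w_{h-1},w_{h+1}\notin S$, so $S\setminus\{w_h\}\subseteq\{w_1,\dots,w_{h-2}\}\cup\{w_{h+2},\dots,w_n\}$. The subgraph of $P_n$ induced on this union is precisely $P_{h-2}+P_{n-h-1}$, and $S\setminus\{w_h\}$ induces the linear forest $F$ obtained from $L^m$ by deleting one $P_1$ component (so $|V(F)|=k-1$). Hence
\[
s'(P_n,L^m)\;=\;s(P_{h-2}+P_{n-h-1},F).
\]
Writing $a=h-2$ and $b=n-h-1$, the hypothesis $k\le h\le n-k+1$ gives $a+b=n-3$ fixed while $a,b\ge k-2=|V(F)|-1$. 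The lemma therefore reduces to the claim that, for any linear forest $F$ with $j$ vertices, $s(P_a+P_b,F)$ depends only on $a+b$ whenever $a,b\ge j-1$.

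To establish the reduced claim, it is enough to show $s(P_a+P_b,F)=s(P_{a+1}+P_{b-1},F)$ whenever $a\ge j-1$ and $b\ge j$, and then iterate. My preferred route is a ``slide'' bijection: given $S=S_L\cup S_R$ with $S_L\subseteq V(P_a)$ and $S_R\subseteq V(P_b)$, enlarge the left path to $P_{a+1}$ and shrink the right to $P_{b-1}$, shifting $S_R$ one step to the right within the relabeled right path. Equivalently, one can use the closed-form identity $s(P_n,G)=\binom{n-|V(G)|+1}{t(G)}\cdot t(G)!/(c_1!c_2!\cdots c_p!)$ (valid for $n$ large enough), where $t(G)$ is the number of components of $G$ and $c_i$ is the multiplicity of length $\ell_i$, to expand
\[
s(P_a+P_b,F)\;=\;\sum_{F=F_L+F_R}s(P_a,F_L)\,s(P_b,F_R)
\]
as a polynomial in $(a,b)$ and then collapse it via a Vandermonde-type identity over the splits of the multiset of components of $F$.

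The main obstacle is the boundary behavior of the slide: if a component of $F$ already occupies the last vertex of $P_b$, the rightward shift is blocked and the bijection must instead redistribute that component across the split. The hypothesis $a,b\ge j-1$ is exactly what supplies enough slack for this redistribution. Equivalently, in the polynomial approach, the same hypothesis is what forces the closed-form formula to apply without truncation to every summand, so the Vandermonde collapse yields a genuine polynomial identity in $a+b$.
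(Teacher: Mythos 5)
Your opening reduction is correct and clean: an induced copy of $L^m$ with $w_h$ isolated forces $w_{h-1},w_{h+1}\notin S$, so indeed $s'(P_n,L^m)=s(P_{h-2}+P_{n-h-1},F)$ with $F=L^m-P_1$, and the hypothesis $k\le h\le n-k+1$ translates exactly into $a,b\ge |V(F)|-1$ with $a+b=n-3$. But notice what this buys: the reduced claim, that $s(P_a+P_b,F)$ depends only on $a+b$ when $a,b\ge |V(F)|-1$, is precisely statement (3) following Theorem~\ref{main} (equivalently, the key step $s(P_{q-1}+P_r,L^m)=s(P_q+P_{r-1},L^m)$ in Theorem~\ref{thm: paths k recon}), which the paper deduces \emph{from} Lemma~\ref{lem: paths k recon} by the same three-vertex-deletion translation run in reverse. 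So the reformulation has not removed any difficulty; it has restated the lemma, and everything hinges on your proof of the reduced claim.

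That is where the genuine gap lies, in both of your proposed routes. For the slide bijection, the case you flag as ``the main obstacle'' (a component of $F$ occupying the boundary vertex, so the shift is blocked and the component must be ``redistributed across the split'') is the entire content of the lemma; you assert that $a,b\ge j-1$ gives enough slack but give no rule for the redistribution and no verification that it is injective and surjective, and no such rule is obvious — this is exactly why the paper resorts to an induction on $k$, comparing $s'(P_n,L^m)$ with $s'(C_n,L^m)$ to exploit the cycle's symmetry and then correcting, via the induction hypothesis, for copies that wrap through the added edge $w_nw_1$ or use both ends $w_1$ and $w_n$. For the second route, your closed form $s(P_n,G)=\binom{n-|V(G)|+1}{t(G)}\,t(G)!/\prod_i c_i!$ and the split expansion $s(P_a+P_b,F)=\sum_{F=F_L+F_R}s(P_a,F_L)s(P_b,F_R)$ are both correct, and the range hypothesis does ensure each summand's polynomial equals the true count; but the asserted ``Vandermonde-type collapse'' is the whole theorem and is not a routine identity: the upper arguments $a-j_L+1$ and $b-j_R+1$ shift by amounts depending on \emph{which} components go left (not just how many), the summands carry multinomial weights, and the naive symmetry/difference manipulations do not make $Q(a+1,b)-Q(a,b+1)$ vanish term by term. (The collapse is in fact true — it is equivalent to the lemma — but proving it is comparable in difficulty to the counting identity in Stanley's Problem~\ref{stanley}, which was solved with generating functions, not a one-line Vandermonde.) Until you either construct the boundary redistribution explicitly or prove the shifted-binomial convolution identity, the argument is a reformulation plus an unproven claim rather than a proof.
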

\begin{proof}
We use induction on $k$.  When $k=1$, there is exactly one copy of $P_1$
containing any specified vertex.  For $k>1$, the value is $0$ unless $L^m$ has
an isolated vertex.

We compare $s'(P_n,L^m)$ with $s'(C_n,L^m)$, where $C_n$ is obtained by adding
the edge $w_nw_1$.  By symmetry, $s'(C_n,L^m)$ is independent of the specified
vertex.  Note that $s'(C_n,L^m)$ does not count copies of $L^m$ in $P_n$ in
which some path starts with $w_1$ and another ends with $w_n$.  On the other
hand, it does count unwanted subgraphs that use the edge $w_nw_1$.

Note that $w_h$ is far enough from the ends of $P_n$ that there is room for
$P_{\ell_i}$ containing $w_1$ and $P_{\ell_j}$ containing $w_n$ without
touching $w_h$.  Also, in $C_n$ the edge $w_nw_1$ may occupy any of $\ell_i-1$
positions within a copy of $P_{\ell_i}$.  Summing over all the possible orders
of the paths or path using $w_1$ and $w_n$, we thus obtain the following
relation.
$$
s'(P_n,L^m) = s'(C_n,L^m)
+\sum_{i,j} s'(P_{n-(\ell_i+\ell_j+2)},L^m_{i,j})
-\sum_i (\ell_i -1)s'(P_{n-(\ell_i+2) }, L^m_i)
$$
Two extra vertices are deleted in each term to separate components of $L^m$.
In the middle sum, $i=j$ is allowed when $m_i\ge2$, and the set $\{i,j\}$
yields two terms when $i\ne j$; this sum is empty when $L^m$ consists of
only one path.  The final sum is actually a double-sum; we will show that the
summand in the inner sum with $\ell_i-1$ terms is constant.

By symmetry, $s'(C_n,L^m)$ is independent of $h$.  To obtain the same
conclusion for the other terms, we check the conditions in the statement of
the induction hypothesis.

For terms in the double sum, deleting $P_{\ell_i}$ and the neighboring vertex
from the beginning of $P_n$ leaves the vertex $w_h$ with a new index $h'$ in
$P_{n-\ell_i-\ell_j-2}$.  With $P_{\ell_i}$ containing $w_1$, we obtain
$h'=h-\ell_i-1$.  We have $h-\ell_i-1\ge k-(\ell_i+\ell_j)$ since $h\ge k$ and
$\ell_j\ge1$.  Similarly,
$$
h-\ell_i-1 \le n-(\ell_i+\ell_j+2) - \left( k- (\ell_i+\ell_j)\right)+1,
$$
since $h\le n-k+1$ and $\ell_i\ge1$.

The last sum is actually also a double sum, but the induction hypothesis
guarantees that the terms in the inner sum are equal.  When considering the
terms involving $\ell_i$, we lose at most $(\ell_i-1)+1$ vertices at the
beginning of the path, yielding $h'\ge h-\ell_i\ge k-\ell_i$.  Similarly,
we lose at most $\ell_i$ vertices from the end of the path and the index must
decrease at least by $2$, so $h'\le h-2\le (n-\ell_i-2)-( k-\ell_i)+1$.

By the induction hypothesis, all contributions are independent of the choice
of the specified vertex when it is in the given range.
\end{proof}

Note that we never need the value of $s'(P_n,L^m)$.
Lemma~\ref{lem: paths k recon} enables us to prove the special case of
Theorem~\ref{main} for linear forests.

\begin{theorem}\label{thm: paths k recon}
Let $L^m$ be a linear forest with $k$ vertices.  For an $n$-vertex graph $G$
that is a disjoint union of paths, each with at least $k-1$ vertices, the
number of induced copies of $L^m$ depends only on $L^m$, $n$, and $|E(G)|$. 
\end{theorem}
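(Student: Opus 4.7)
My plan is to proceed by strong induction on $k = |V(L^m)|$. The base case $k = 1$ is immediate: $s(G, P_1) = n$. For the inductive step, I rephrase the conclusion as follows: for any linear forest $L^m$ with exactly $k$ vertices and any two linear forests $G, G'$ sharing $n$, the number of components $q$, and having every component of size at least $k - 1$, one has $s(G, L^m) = s(G', L^m)$. The key reduction is to prove the deck equality $\cD_k(P_a + P_b) = \cD_k(P_{a-1} + P_{b+1})$ for $a \ge k$ and $b \ge k - 1$: by Lemma~\ref{samedeck} this propagates to $\cD_k(P_a + P_b + H) = \cD_k(P_{a-1} + P_{b+1} + H)$ for any linear forest $H$, and any two admissible configurations of $G$ are connected by a chain of such two-component single-vertex shifts (since some component has size $\ge k$ whenever the configuration is not the unique one in which every part equals $k - 1$).

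Since $P_a + P_b$ is itself a linear forest, its $k$-deck consists only of linear forests, so the deck equality reduces to showing $s(P_a + P_b, L^m) = s(P_{a-1} + P_{b+1}, L^m)$ for every linear forest $L^m$ with at most $k$ vertices. For $|L^m| < k$ this is immediate from the inductive hypothesis. For $|L^m| = k$ I would derive a peel-off recurrence by conditioning on the component of the copy of $L^m$ containing the added endpoint $w_{n+1}$ of $P_{n+1}$:
\[
s(P_{n+1} + H, L^m) - s(P_n + H, L^m) = \sum_{i \st m_i \ge 1} s(P_{n - \ell_i} + H, L^m_i).
\]
Telescoping via the intermediate graph $P_{a-1} + P_b$ then yields
\[
s(P_a + P_b, L^m) - s(P_{a-1} + P_{b+1}, L^m) = \sum_{i \st m_i \ge 1} \Bigl[s(P_{a-1-\ell_i} + P_b, L^m_i) - s(P_{a-1} + P_{b-\ell_i}, L^m_i)\Bigr].
\]
Each $L^m_i$ has $k - \ell_i < k$ vertices; within each bracket the two graphs share the same vertex count and have two components, and the conditions $a \ge k$ and $b \ge k - 1$ guarantee that every component has size at least $(k - \ell_i) - 1$. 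Hence the inductive hypothesis at level $k - \ell_i$ forces each bracket to vanish.

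The main obstacle I foresee is the boundary bookkeeping in the recurrence. When $\ell_i$ is at its maximum (for instance, $\ell_i = k$ with $L^m = P_k$ and $a = k$), the indices $a - 1 - \ell_i$ or $b - \ell_i$ can drop to zero or become negative, and $L^m_i$ can be empty. These extreme cases require either explicit verification or the convention that $P_m$ with $m \le 0$ denotes the empty graph and $s(G, \varnothing) = 1$. Once these are handled, the two-component shift identity is established, and Lemma~\ref{samedeck} lifts it to an arbitrary number of components, completing the induction.
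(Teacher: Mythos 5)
Your proposal is correct, but it reaches the key two-component identity by a genuinely different route than the paper. Both arguments share the same outer skeleton: reduce to two components via Lemma~\ref{samedeck} together with the observation that any two admissible part-size configurations (fixed $n$ and fixed number of components, all parts at least $k-1$) are linked by single-vertex shifts $P_a+P_b\mapsto P_{a-1}+P_{b+1}$ with $a\ge k$, $b\ge k-1$. Where you diverge is in proving $s(P_a+P_b,L^m)=s(P_{a-1}+P_{b+1},L^m)$: the paper introduces the specified-vertex count $s'(P_n,\cdot)$ and proves (Lemma~\ref{lem: paths k recon}, by induction on $k$ via a comparison with $C_n$ and boundary correction terms) that it is independent of the specified vertex in the middle range, then realizes both sides as $s'(P_{q+r+2},L^m+P_1)$ at two admissible specified vertices. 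You instead run a strong induction on $k$ for the theorem itself, peel off the component of $L^m$ containing the free endpoint of the path to get the recurrence $s(P_{n+1}+H,L^m)-s(P_n+H,L^m)=\sum_i s(P_{n-\ell_i}+H,L^m_i)$, and telescope so that each bracket involves $L^m_i$ with $k-\ell_i<k$ vertices and two 2-component hosts of equal order and size whose parts have at least $(k-\ell_i)-1$ vertices, so the inductive hypothesis kills it. This is sound, including the boundary cases you flag: degenerate indices force $\ell_i\in\{k-1,k\}$, hence $L^m_i\in\{\varnothing,P_1\}$; note that in the $L^m_i=P_1$ case the two hosts have different edge counts (one part vanishes), so the inductive hypothesis as stated does not literally apply and you must invoke $s(G,P_1)=|V(G)|$ directly --- a triviality, but it should be said. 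The trade-off: your argument is more self-contained and elementary for linear forests (no auxiliary cycle comparison, no specified-vertex bookkeeping), while the paper's $s'$-invariance lemma is not just scaffolding for this theorem --- it is reused verbatim in Lemma~\ref{lem: cycle path} to absorb cycle components, so the paper's extra machinery pays off in Section~3, whereas your route would need a separate device there.
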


\begin{proof}
Given $n$, fixing $|E(G)|$ is equivalent to fixing the number of components.
By keeping all but two components fixed and applying
Lemma~\ref{lem: two components}, it therefore suffices to show
${s(P_{q-1}+P_r,L^m)=s(P_q+P_{r-1},L^m)}$ for $q,r\ge k$. 

Consider $P_{q+r+2}$ with $V(P_{q+r+2}) = \{ w_1,...w_{q+r+2} \}$.  Deleting
$\{w_q,w_{q+1},w_{q+2}\}$ yields $P_{q-1}+P_r$, while deleting
$\{w_{q+1},w_{q+2},w_{q+3}\}$ yields $P_q+P_{r-1}$.  Thus
$s(P_{q-1}+P_r,L^m) = s'(P_{q+r+2},L^m+P_1)$ when specifying $w_{q+1}$, while
$s(P_q+P_{r-1},L^m)=s'(P_{q+r+2},L^m+P_1)$ when specifying $w_{q+1}$.  By
Lemma~\ref{lem: paths k recon}, we need only capture $q+1$ and $q+2$ in
the given range.  

We have $|V(L^m+P_1))|=k+1$ and apply Lemma~\ref{lem: paths k recon} with
$n=q+r+2$.  Since $q,r\ge k$,
$$
|V(L^m+P_1))|=k+1\le q+1<q+2=n-r\le n-k= n-|V(L^m+P_1))|+1,
$$
as desired. 
\end{proof}

\begin{corollary}\label{cor: paths}
If $G$ and $G'$ are linear forests with the same number of vertices and same
number of edges whose components have at least $k-1$ vertices, then
$\cD_k(G)=\cD_k(G')$.
\end{corollary}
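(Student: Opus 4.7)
The plan is to invoke the standard characterization that $\cD_k(G)=\cD_k(G')$ if and only if $s(G,H)=s(G',H)$ for every graph $H$ on $k$ vertices. So the task reduces to verifying this equality for each $k$-vertex $H$, split into two cases according to the structure of $H$.

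The first case is when $H$ is not a linear forest. Since every induced subgraph of a linear forest is again a linear forest (a disjoint union of paths, allowing isolated vertices as degenerate single-vertex paths), both $s(G,H)$ and $s(G',H)$ are $0$ for any such $H$. These test graphs contribute nothing on either side and can be ignored.

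The second case is when $H$ is a $k$-vertex linear forest. I would write $H$ in the form $L^m$ from the definition preceding Lemma~\ref{lem: paths k recon} and apply Theorem~\ref{thm: paths k recon} directly to each of $G$ and $G'$. The hypotheses match: both graphs are disjoint unions of paths on a common number of vertices $n$, every component has at least $k-1$ vertices, and $|E(G)|=|E(G')|$. The theorem says that $s(G,L^m)$ depends only on $L^m$, $n$, and $|E(G)|$, and since $G$ and $G'$ agree on all three, we get $s(G,L^m)=s(G',L^m)$.

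Combining the two cases yields $s(G,H)=s(G',H)$ for every $k$-vertex $H$, which is equivalent to $\cD_k(G)=\cD_k(G')$. There is no real obstacle here, since Theorem~\ref{thm: paths k recon} already carries the combinatorial load; the only genuine check is the trivial observation that induced subgraphs of linear forests are themselves linear forests, which justifies discarding the non-linear-forest test graphs.
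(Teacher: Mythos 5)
Your argument is correct and is exactly the route the paper intends: the corollary is stated as an immediate consequence of Theorem~\ref{thm: paths k recon}, using the standard equivalence that equal $k$-decks means $s(G,H)=s(G',H)$ for all $k$-vertex $H$, with non-linear-forest $H$ contributing zero on both sides. The only detail you add, that induced subgraphs of linear forests are linear forests, is the same trivial observation the paper leaves implicit.
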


\section{Common $k$-Decks for Maximum Degree $2$}

We can extend the results to allow cycles because deleting any vertex of a
cycle leaves the same path.  Again the problem will reduce to working with just
two components.

\begin{lemma}\label{lem: cycle path}
Let $L^m$ be a linear forest with $k$ vertices.  If $q\ge k+1$ and $r\ge k-1$,
then $s(P_{q+r},L^m) = s(C_q+P_r,L^m)$
\end{lemma}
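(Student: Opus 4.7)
The plan is to mimic the strategy of Lemma~\ref{lem: paths k recon}, comparing $P_{q+r}$ and $C_q+P_r$ to the common reference $P_q+P_r$. Identify all three graphs on the vertex set $\{w_1,\ldots,w_{q+r}\}$ sharing the ``spine'' edges $\{w_iw_{i+1}:i\ne q\}$; then $P_{q+r}$ has the extra edge $w_qw_{q+1}$ and $C_q+P_r$ has the extra edge $w_1w_q$. Setting
\[
\delta_P=s(P_{q+r},L^m)-s(P_q+P_r,L^m),\qquad
\delta_C=s(C_q+P_r,L^m)-s(P_q+P_r,L^m),
\]
the lemma is equivalent to $\delta_P=\delta_C$.

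To compute $\delta_P$ I restrict to subsets $S$ with $\{w_q,w_{q+1}\}\subseteq S$, since on every other subset the two graphs induce the same subgraph. On such $S$, the vertex $w_q$ caps a path component $P_a\subseteq(P_q+P_r)[S]$ occupying $w_{q-a+1},\ldots,w_q$, and $w_{q+1}$ begins a component $P_b$ occupying $w_{q+1},\ldots,w_{q+b}$, while in $P_{q+r}[S]$ these merge through the edge $w_qw_{q+1}$ into a single $P_{a+b}$. The remainder of $S$ then lives in the outer forest $P_{q-a-1}+P_{r-b-1}$ (after enforcing $w_{q-a},w_{q+b+1}\notin S$), yielding
\[
\delta_P=\sum_{a,b\ge1}\Bigl[s(P_{q-a-1}+P_{r-b-1},\,L^m-P_{a+b})-s(P_{q-a-1}+P_{r-b-1},\,L^m-P_a-P_b)\Bigr],
\]
with the convention that an $s$-term equals $0$ whenever $L^m$ does not contain the indicated components (with multiplicity, handled carefully when $a=b$). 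An entirely parallel analysis for $\delta_C$, using the cycle edge $w_1w_q$ and the two end-components of $P_q$ (the one starting at $w_1$ and the one ending at $w_q$, which must be distinct since $q\ge k+1$ forbids a $P_q$-component of $S$), gives
\[
\delta_C=\sum_{a,b\ge1}\Bigl[s(P_{q-a-b-2}+P_r,\,L^m-P_{a+b})-s(P_{q-a-b-2}+P_r,\,L^m-P_a-P_b)\Bigr].
\]

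The key step is term-by-term matching. For each $(a,b)$, the host forests $P_{q-a-1}+P_{r-b-1}$ and $P_{q-a-b-2}+P_r$ are linear forests with the same number of vertices ($q+r-a-b-2$) and the same number of edges, and the constraints $a,b\ge1$ and $a+b\le k$ combined with $q\ge k+1$ and $r\ge k-1$ guarantee that every path component of either host has at least $k-a-b-1$ vertices. Theorem~\ref{thm: paths k recon}, applied at the parameter $k'=k-a-b$, therefore equates the corresponding $s$-values, giving $\delta_P=\delta_C$ and hence the lemma. The main obstacle I anticipate is tracking the boundary configurations where one of $q-a-1$, $r-b-1$, or $q-a-b-2$ vanishes or would be negative; there one interprets $P_j=\varnothing$ for $j\le0$ and checks that when a host degenerates on one side the other host degenerates with the same vertex count, so that the pairing via Theorem~\ref{thm: paths k recon} is preserved.
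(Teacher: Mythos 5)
Your argument is correct in substance, and it takes a genuinely different route from the paper. The paper expands $s(P_{q+r},L^m)$ around the single vertex $u_q$ and $s(C_q+P_r,L^m)$ around an arbitrary cycle vertex, stratifying by which component $P_{\ell_i}$ of $L^m$ covers that vertex and at what offset $t$; the comparison then reduces to $s(P_{q-\ell_i-2}+P_r,L^m_i)=s(P_{q-(\ell_i-t)-2}+P_{r-t},L^m_i)$, which is proved by splicing the two host paths together with three new vertices and invoking Lemma~\ref{lem: paths k recon} (independence of $s'$ from the location of the specified vertex). You instead compare both graphs to the common edge-deleted graph $P_q+P_r$, stratify the difference by the pair of run lengths $(a,b)$ at the two ends of the extra edge, and match corresponding terms using Theorem~\ref{thm: paths k recon} for two-component hosts. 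Both proofs rest only on the Section 2 machinery, so there is no circularity; your version buys a clean black-box use of the linear-forest theorem at the price of two-sided boundary bookkeeping, whereas the paper's one-vertex expansion keeps the boundary analysis one-sided.

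One caveat about the boundary cases, which you flagged but whose sketched resolution is not quite right: when a host degenerates, the two hosts need not have equal order (for $b=r$, which forces $r=k-1$, $b=k-1$, $a=1$, the hosts have $q-2$ and $q-3$ vertices), and even when the orders agree the edge counts can differ, because the hosts can have different numbers of nonempty components; so Theorem~\ref{thm: paths k recon} does not apply as stated in these configurations. What saves them is that degeneracy forces $a+b\ge k-1$ (either $b\ge r-1\ge k-2$, or $q\le a+b+2$ together with $q\ge k+1$), so the residual forests $L^m-P_{a+b}$ and $L^m-P_a-P_b$ have at most one vertex: if they are empty, each $s$-value is $1$ (or the term is $0$ when $L^m$ lacks the required components), independent of the host; if they equal $P_1$, the $s$-values are the hosts' orders, which do coincide in exactly those $a+b=k-1$ configurations. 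With that substitute justification at the boundary, your term-by-term matching and the conclusion $\delta_P=\delta_C$ go through.
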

\begin{proof}
Let $\VEC u1{q+r}$ be the vertices of $V(P_{q+r})$ in order.  Consider an
induced copy of $L^m$.  Either $u_q$ is not used, or it appears in a path of
some length $\ell_i$.  In the latter case let $t$ be the number of vertices
starting with $u_q$ that lie in the copy of $P_{\ell_i}$; the hypotheses on $q$
and $r$ allow $t$ to run from $1$ to $\ell_i$.  These possibilities yield
$$
s(P_{q+r},L^m)= s(P_{q-1}+P_r, L^m)
+ \SE i1p \SE t1{\ell_i} s(P_{q-(\ell_i-t)-2}+P_{r-t},L^m_i).
$$

Now consider a vertex $x$ on $C_q$ in $C_q+P_r$.  By symmetry, the choice of
$x$ does not matter.  As above, in a copy of $L^m$ the vertex $x$ may be
omitted or appear in a copy of $P_{\ell_i}$ for some $i$.  By symmetry, the
position of $x$ in its copy of $P_{\ell_i}$ does not matter, since deleting
$V(P_{\ell_i})$ and two additional unused vertices always leaves
$P_{q-\ell_i-2}$.  Thus
$$
s(C_q+P_r,L^m)= s(P_{q-1}+P_r,L^m)
+ \SE i1p \ell_i s( P_{q-\ell_i -2}+P_r,L^m_i)
$$

It suffices to prove that the right sides of these two equations are equal.
The first term is identical.  It remains to show 
$$
s(P_{q-\ell_i-2}+P_r,L^m_i)
=s(P_{q-(\ell_i-t)-2}+P_{r-t},L^m_i)
$$
for $1\le i\le p$ and $1 \le t \le \ell_i$.  Adding vertices
$w_{h-1},w_h,w_{h+1}$ to connect the two given paths shows that each such value
is $s'(P_n,L^m_i+P_1)$ for the specified vertex $w_h$ along the host
path with vertices $\VEC w1n$, where $n=q+r+1-\ell_i$.
Theorem~\ref{thm: paths k recon} states that the value does not depend on $h$
as long as $k'\le h\le n-k'+1$, where $k'$ is the number of vertices in the
desired linear forest.

Here $k'=k-\ell_i+1$ and $n=q+r+1-\ell_i$, so we seek
$k-\ell_i+1\le h\le q+r-k+1$.  The lowest value taken by $h$ is $q-\ell_i$, and
the highest is $q$ (when $t=\ell_i$).  Since $q\ge k+1$ and $r\ge k-1$, the
desired inequalities hold (and we cannot weaken the hypotheses).
\end{proof}

Lemma~\ref{lem: cycle path} and Lemma~\ref{lem: paths k recon} yield the 
desired result for graphs that are not $2$-regular.

\begin{corollary}\label{cor: paths}
Let $G$ and $G'$ be non-regular graphs with maximum degree $2$ that have the
same number of vertices and same number of edges.  If all cycles in $G$ and
$G'$ have more than $k$ vertices and all path components have at least $k-1$
vertices, then $\cD_k(G)=\cD_k(G')$.
\end{corollary}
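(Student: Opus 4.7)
The plan is to reduce the non-regular case to the linear-forest case already proved in Section~2. The key engine is Lemma~\ref{lem: cycle path}, which gives $s(P_{q+r},L^m)=s(C_q+P_r,L^m)$ for every $k$-vertex linear forest $L^m$, provided $q\ge k+1$ and $r\ge k-1$. Although this identity is stated only for linear-forest targets $L^m$, under the cycle-length hypothesis it will be enough to conclude equality of the full $k$-decks of $C_q+P_r$ and $P_{q+r}$.

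First I would record the observation that every $k$-card of a graph of maximum degree $2$ whose components are paths or cycles of length greater than $k$ must itself be a linear forest: an induced subgraph on $k$ vertices cannot contain any whole cycle component, and every subgraph of such a graph has maximum degree at most $2$, so any $k$-card is a disjoint union of paths. Applied simultaneously to $G$, $G'$, each $C_q+P_r$ with $q\ge k+1$, and each $P_{q+r}$, this shows that all these $k$-decks are supported on linear forests. Consequently Lemma~\ref{lem: cycle path} upgrades at once to $\cD_k(C_q+P_r)=\cD_k(P_{q+r})$ whenever $q\ge k+1$ and $r\ge k-1$.

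Next I would invoke Lemma~\ref{samedeck} to absorb cycles into a single path component. Since $G$ is not $2$-regular, it has a path component $P_r$ with $r\ge k-1$. For each cycle component $C_q$ of $G$, write $G=C_q+P_r+H$, where $H$ is the rest, and replace $C_q+P_r$ by $P_{q+r}$; by the previous paragraph together with Lemma~\ref{samedeck}, this preserves $\cD_k(G)$. The new component has $q+r\ge 2k\ge k-1$ vertices, so it is again a path long enough to absorb further cycles; iterating converts $G$ into a linear forest $L$ with the same vertex and edge counts. The same procedure turns $G'$ into a linear forest $L'$ with matching vertex and edge counts, every component having at least $k-1$ vertices. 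The linear-forest corollary of Section~2 then yields $\cD_k(L)=\cD_k(L')$, and chaining the equalities $\cD_k(G)=\cD_k(L)=\cD_k(L')=\cD_k(G')$ completes the proof.

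The main obstacle is the gap between Lemma~\ref{lem: cycle path}, which compares induced-subgraph counts only against linear-forest targets, and the full $k$-deck equality we need. The cycle-length hypothesis $q\ge k+1$ closes this gap by forcing every $k$-card of $C_q+P_r$ and of $P_{q+r}$ to be a linear forest; once that observation is in place, the cycle-by-cycle absorption via Lemma~\ref{samedeck} and the final appeal to the linear-forest case are routine bookkeeping.
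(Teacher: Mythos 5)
Your proof is correct and follows essentially the same route as the paper: use Lemma~\ref{lem: cycle path} (noting that all $k$-cards are linear forests, so the counting identity upgrades to full $k$-deck equality) together with Lemma~\ref{samedeck} to absorb each cycle into a path component, then apply the linear-forest corollary of Section~2. Your write-up merely makes explicit two small steps the paper leaves implicit.
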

\begin{proof}
Since $G$ and $G'$ are not regular, each has at least one path component.
Using Lemma~\ref{lem: cycle path} to absorb cycles into paths, each has the
same $k$-deck as some linear forest with the same numbers of vertices and edges
as it and with at least $k-1$ vertices in each component.  By
Corollary~\ref{cor: paths}, the resulting linear forests $H$ and $H'$ have the
same $k$-deck.
\end{proof}

It remains only to consider $2$-regular graphs, which was our original 
motivation.  The results from the earlier cases simplify the proof here.

\begin{theorem}
Let $L^m$ be a linear forest with $k$ vertices. For $n$-vertex graphs whose
components are cycles with at least $k+1$ vertices, the number of induced
copies of $L^m$ depends only on $L^m$ and $n$. 
\end{theorem}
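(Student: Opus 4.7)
\emph{Overall strategy.} My plan is to reduce the theorem to a single cycle-merging identity and verify it by the same vertex-pivoting technique used in Lemma~\ref{lem: cycle path}. Any $n$-vertex $2$-regular graph $G$ satisfying the hypothesis decomposes as $C_{a_1}+\cdots+C_{a_s}$ with each $a_j\ge k+1$, so induction on $s$ reduces the theorem to showing that merging two cycles is harmless: for every linear forest $L'$ with at most $k$ vertices and all $q,r\ge k+1$, one has $s(C_q+C_r,L')=s(C_{q+r},L')$. Since every $k$-card of a graph in our family is a linear forest, this is equivalent to $\cD_k(C_q+C_r)=\cD_k(C_{q+r})$, and Lemma~\ref{samedeck} then absorbs the untouched components $C_{a_3}+\cdots+C_{a_s}$ into a common third graph to complete the induction.

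\emph{The merging identity.} To prove $s(C_{q+r},L^m)=s(C_q+C_r,L^m)$, I would pivot on a single vertex $x$, following the derivation used for Lemma~\ref{lem: cycle path}. Partitioning induced copies of $L^m$ by whether $x$ is used and, if so, by which path $P_{\ell_i}$ of $L^m$ contains it, cyclic symmetry contributes a factor of $\ell_i$ for the position of $x$ within that path, and removing $V(P_{\ell_i})$ together with its two cycle-neighbors leaves a smaller host in which $L^m_i$ must be embedded. Carrying this out gives
\begin{equation*}
s(C_{q+r},L^m)=s(P_{q+r-1},L^m)+\sum_{i=1}^{p}\ell_i\,s(P_{q+r-\ell_i-2},L^m_i),
\end{equation*}
and, with the pivot vertex chosen inside $C_q$,
\begin{equation*}
s(C_q+C_r,L^m)=s(P_{q-1}+C_r,L^m)+\sum_{i=1}^{p}\ell_i\,s(P_{q-\ell_i-2}+C_r,L^m_i).
\end{equation*}

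\emph{Matching termwise.} The two right-hand sides should now agree term by term via Lemma~\ref{lem: cycle path}. For the first terms I would apply that lemma to $L^m$ with cycle $C_r$ and path $P_{q-1}$; the hypotheses $r\ge k+1$ and $q-1\ge k-1$ both follow from $q,r\ge k+1$. For the $i$th summand I would apply the lemma to $L^m_i$ (which has $k-\ell_i$ vertices) with cycle $C_r$ and path $P_{q-\ell_i-2}$; the required inequalities $r\ge(k-\ell_i)+1$ and $q-\ell_i-2\ge(k-\ell_i)-1$ again reduce to $q,r\ge k+1$ together with $\ell_i\ge 1$.

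\emph{Expected obstacle.} The only delicate point is the boundary case $q=k+1$ with $\ell_i=k$, which forces $L^m=P_k$, $L^m_i=\nul$, and $P_{q-\ell_i-2}=P_{-1}$. Under the convention $P_n=\nul$ for $n\le 0$ (already implicit in the proof of Lemma~\ref{lem: cycle path}), both sides evaluate to $k$ and agree. Checking that this convention is consistent with the pivot derivation when the two cycle-neighbors of the chosen $P_{\ell_i}$ coincide is the main bookkeeping I would need to verify, after which every inequality needed for Lemma~\ref{lem: cycle path} follows mechanically from the hypothesis $q,r\ge k+1$.
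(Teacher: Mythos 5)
Your proposal is correct and follows essentially the same route as the paper: reduce to the single merging identity $s(C_{q+r},L^m)=s(C_q+C_r,L^m)$ for $q,r\ge k+1$ (absorbing untouched components via Lemma~\ref{samedeck}), expand both counts by pivoting on one vertex, and match terms using Lemma~\ref{lem: cycle path}. The only cosmetic difference is that you pivot on a vertex of $C_q$ where the paper pivots on a vertex of $C_r$ (symmetric since $q,r\ge k+1$), and the degenerate term you flag ($\ell_i=k$, $q=k+1$, empty residual host) is handled in the paper implicitly by the same convention, since its inequality $r-\ell_i-2\ge k-\ell_i-1$ is checked with equality allowed.
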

\begin{proof}
In particular, for each such graph, we show that the number of induced copies
of $L^m$ is the same as in $C_n$.  It suffices to show
$s(C_{q+r},L^m)=s(C_q+C_r, L^m)$ when $q,r \ge k+1$; we can then iteratively
reduce the number of components without changing the $k$-deck.

Choose $x\in V(C_{q+r})$ and $y\in V(C_r)$.  We expand the two needed
quantities by considering the usage of $x$ and $y$ in induced copies of $L^m$.
In each case, the specified vertex may be omitted, or it may occur in a copy
of some path $P_{\ell_i}$.  In the latter case, it may occur with any position
in $P_{\ell_i}$, but the resulting number of subgraphs is the same for each
position, since deleting any $\ell_i$-vertex path from a cycle leaves a path
of the same length.  We thus have the following two expansions.
\begin{align*}
s(C_{q+r},L^m)
&=s(P_{q+r-1},L^m)+\SE i1p \ell_i s(P_{q+r-\ell_i -2},L^m_i)\\
s(C_q+C_r,L^m)
&=s(C_q+P_{r-1},L^m)+\SE i1p \ell_i s(C_{q}+P_{r-\ell_i -2},L^m_i)
\end{align*}

It suffices to use Lemma~\ref{lem: cycle path} to show that corresponding terms
on the right are equal.  Equality of the first terms follows from $q\ge k+1$
and $r-1 \ge k-1$, which hold by assumption.  For the other case it suffices to
have $q\ge k-\ell_i +1$ and $r-\ell_i-2\ge k-\ell_i-1$.  The first inequality
holds since $q\ge k+1$.  The second simplifies to $r\ge k+1$, which holds by
assumption.
\end{proof}

\begin{corollary}\label{cor: lowerbound}
Any two $n$-vertex graphs whose components are cycles with at least $k+1$
vertices have identical $k$-decks.
\end{corollary}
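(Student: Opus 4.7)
The plan is to deduce the corollary directly from the preceding theorem together with a structural observation: every $k$-card of such a graph must already be a linear forest. First I would argue that if every component is a cycle of length at least $k+1$, then any induced subgraph on $k$ vertices must have maximum degree at most $2$ and can contain no cycle, since a cycle in the induced subgraph would require an entire cycle component of the host to be contained in $k$ vertices, contradicting the length bound. Hence every member of $\cD_k(G)$ is a linear forest on $k$ vertices.

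Next I would invoke the theorem just proved: for each linear forest $L^m$ with $k$ vertices, the count $s(G,L^m)$ depends only on $L^m$ and $n$, not on the particular way the $n$ vertices are partitioned into cycle components (as long as each has at least $k+1$ vertices). Combining this with the observation that $s(G,H) = 0$ whenever $H$ is a $k$-vertex graph that is not a linear forest, the multiplicity of each graph $H$ in $\cD_k(G)$ is determined by $n$ alone.

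Therefore any two such graphs produce the same multiset of $k$-cards, which is the claim. I do not expect any real obstacle: the structural step in the first paragraph is immediate from the hypothesis $k+1 \le $ girth, and the enumeration step is exactly what the preceding theorem delivers. The only thing to be careful about is to note that the theorem gives equality of the \emph{counts} for every linear forest $L^m$ on $k$ vertices, which is equivalent to equality of the $k$-decks once one knows all $k$-cards are linear forests.
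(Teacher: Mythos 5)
Your argument is correct and is exactly the route the paper intends: the corollary follows immediately from the preceding theorem because every $k$-card of a disjoint union of cycles of length at least $k+1$ is a linear forest, and the theorem fixes the count of every $k$-vertex linear forest in terms of $n$ alone. Nothing is missing.
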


With Corollaries~\ref{cor: paths} and \ref{cor: lowerbound},
we have now proved Theorem~\ref{main}, our main result.

\section{$\rho(G)$ for Graphs with Maximum Degree 2 }

We first reduce the problem of $k$-deck reconstruction to the problem
of finding all components with more than $k$ vertices.  This generalizes
classical reconstruction of disconnected graphs, and it applies to all graphs.

\begin{lemma}\label{kplus}
If all the components with more than $k$ vertices in a graph $G$ can be
determined from $\cD_k(G)$, then $G$ is $k$-deck reconstructible.
\end{lemma}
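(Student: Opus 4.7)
The plan is to reduce the question to reconstructing the small-component part of $G$ and then to peel off such components one order at a time. Suppose $G'$ is any graph with $\cD_k(G')=\cD_k(G)$. By hypothesis, the multiset of components having more than $k$ vertices is determined by the $k$-deck, so $G$ and $G'$ share this multiset; call the corresponding disjoint union $L$. Write $G=L+S$ and $G'=L+S'$, where every component of $S$ and every component of $S'$ has at most $k$ vertices. By Lemma~\ref{samedeck}, $\cD_k(S)=\cD_k(S')$, and it suffices to prove $S=S'$.

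The key observation is this: if every component of a graph $F^*$ has at most $j$ vertices and $F$ is a connected graph on $j$ vertices, then $s(F^*,F)$ equals the number of components of $F^*$ isomorphic to $F$. Indeed, any induced copy of $F$ in $F^*$ is connected and therefore lies inside a single component $C$; since $|V(C)|\le j=|V(F)|$, the copy must exhaust $V(C)$ and coincide with $C$ as an induced subgraph. Because the multiplicity of $F$ in $\cD_j(F^*)$ is precisely $s(F^*,F)$, the $j$-deck of $F^*$ determines the multiset of components of $F^*$ of order exactly $j$.

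I would apply this first with $j=k$ to the graphs $S$ and $S'$. Since $\cD_k(S)=\cD_k(S')$, these graphs share the same multiset of components of order $k$. I cancel this shared part using Lemma~\ref{samedeck} to obtain graphs $S_1$ and $S_1'$, each with all components of order at most $k-1$ and with $\cD_k(S_1)=\cD_k(S_1')$. By Observation~\ref{k-1}, $\cD_{k-1}(S_1)=\cD_{k-1}(S_1')$, so the key observation with $j=k-1$ determines the components of $S_1$ of order exactly $k-1$. Iterating this peeling procedure for $j=k,k-1,\ldots,1$ determines every component of $S$, so $S=S'$ and hence $G=G'$.

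This is mainly a bookkeeping argument, and I do not expect any step to present a serious obstacle; the one observation worth isolating is that components of order at most $j$ admit no connected $j$-vertex induced subgraph other than the entire component, and it is exactly this fact (combined with Lemma~\ref{samedeck} and Observation~\ref{k-1}) that lets each downward iteration peel off precisely one size of component.
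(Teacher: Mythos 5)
Your argument is correct and rests on the same key fact as the paper's proof: a connected $j$-vertex induced subgraph of a graph whose components all have at most $j$ vertices must be an entire component, so the $j$-deck reads off the components of order exactly $j$. The difference is bookkeeping. The paper never passes to residual graphs: for each connected $k$-vertex graph $F$ it counts the components of $G$ isomorphic to $F$ as the number of cards isomorphic to $F$ minus $\sum_i s(H_i,F)$ over the known components $H_i$ with more than $k$ vertices, and then iterates on $j=k-1,k-2,\ldots$, always working with the decks of $G$ itself via Observation~\ref{k-1}. You instead compare $G$ with an arbitrary reconstruction $G'$, cancel the common large components, and peel, which requires the harder (converse) direction of Lemma~\ref{samedeck} at every stage; the paper's subtraction avoids that machinery. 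One small repair: you apply Observation~\ref{k-1} to the residual graphs $S_1,S_2,\ldots$, but that observation (each $(k-1)$-card arises $n-k+1$ times) needs the host graph to have at least $k$ vertices; for instance with $G=C_5+P_2$ and $k=4$ the residual $S=P_2$ has an empty $4$-deck, from which its $3$-deck does not follow. The fix is immediate: apply Observation~\ref{k-1} to $G$ and $G'$ to get $\cD_j(G)=\cD_j(G')$ for all $j\le k$, and then cancel with Lemma~\ref{samedeck} at level $j$ to obtain $\cD_j(S_i)=\cD_j(S_i')$ at each stage of the peeling.
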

\begin{proof}
It suffices to show that all the components with exactly $k$ vertices can
be determined, since we have already observed that $\cD_k(G)$ determines
$\cD_{k-1}(G)$.  We then iterate to find all smaller components.

Let $\VEC H1r$ be the components of $G$ with more than $k$ vertices.  Let $F$
be a component with exactly $k$ vertices.  The number of components of $G$
isomorphic to $F$ is obtained by subtracting $\SE i1r s(H_i,F)$ from the number
of cards in $\cD_k(G)$ isomorphic to $F$.
\end{proof}

\begin{lemma}\label{k1count}
If $\Delta(G)=2$, then the number of components of $G$ that are paths with
at least $k-1$ vertices is $s(G,P_{k-1})-s(G,P_k)-ks(G,C_k)$.
\end{lemma}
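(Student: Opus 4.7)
The plan is to compute each of the three counts on the right side by breaking the sum over components of $G$, which, since $\Delta(G)=2$, consist entirely of paths and cycles. Then I would show that the linear combination $s(G,P_{k-1})-s(G,P_k)-k\,s(G,C_k)$ telescopes so that every cycle contribution cancels and each path component of order at least $k-1$ contributes exactly $1$.

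First, I would record the contribution of each component type. A path $P_n$ contains $\max(0,n-k+2)$ induced copies of $P_{k-1}$ and $\max(0,n-k+1)$ induced copies of $P_k$, simply by counting starting positions of a block of consecutive vertices. A cycle $C_n$ with $n\ge k$ contains exactly $n$ induced copies of $P_{k-1}$ (one for each choice of starting vertex for a $(k-1)$-arc, which is an induced path since $k-1<n$), and when $n\ge k+1$ it similarly contains $n$ induced copies of $P_k$; no shorter cycle contributes to either. Finally, only a component isomorphic to $C_k$ contributes to $s(G,C_k)$, and each such component contributes $1$.

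Writing $p_n$ and $c_n$ for the numbers of path and cycle components of $G$ on $n$ vertices, the three sums become
\begin{align*}
s(G,P_{k-1}) &= \sum_{n\ge k-1}(n-k+2)p_n + \sum_{n\ge k} n\,c_n,\\
s(G,P_k) &= \sum_{n\ge k}(n-k+1)p_n + \sum_{n\ge k+1} n\,c_n,\\
s(G,C_k) &= c_k.
\end{align*}
Subtracting, the path terms give $p_{k-1}+\sum_{n\ge k}\bigl[(n-k+2)-(n-k+1)\bigr]p_n=\sum_{n\ge k-1}p_n$, and the cycle terms give $\sum_{n\ge k}n\,c_n-\sum_{n\ge k+1}n\,c_n-k\,c_k=k\,c_k-k\,c_k=0$. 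Hence $s(G,P_{k-1})-s(G,P_k)-k\,s(G,C_k)=\sum_{n\ge k-1}p_n$, which is the number of path components of $G$ with at least $k-1$ vertices.

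There is no real obstacle; the only thing that needs a moment of care is the boundary behavior. One must verify that $C_{k-1}$ does not contribute to $s(G,P_{k-1})$ (the $k-1$ consecutive vertices of $C_{k-1}$ form $C_{k-1}$, not $P_{k-1}$), that $C_k$ contributes $k$ to $s(G,P_{k-1})$ but $0$ to $s(G,P_k)$ (exactly the factor that cancels against the $k\,s(G,C_k)$ correction), and that $P_{k-1}$ itself is counted once in $s(G,P_{k-1})$ and not at all in $s(G,P_k)$. Once these boundary cases are checked, the cancellation is automatic.
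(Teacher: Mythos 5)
Your proof is correct and follows essentially the same route as the paper: a component-by-component count of contributions to $s(G,P_{k-1})$, $s(G,P_k)$, and $s(G,C_k)$, with the cycle contributions cancelling and each path component on at least $k-1$ vertices surviving with weight $1$. Your version just writes out the paper's per-component bookkeeping as explicit sums and checks the same boundary cases ($C_{k-1}$, $C_k$, $P_{k-1}$) that the paper handles implicitly.
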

\begin{proof}
Each path component with at least $k-1$ vertices contributes exactly $1$ to
$s(G,P_{k-1})-s(G,P_k)$.  Each $m$-cycle with $m>k$ contributes $m$ to both
$s(G,P_{k-1})$ and $s(G,P_k)$.  Each $k$-cycle contributes $k$ to both
$s(G,P_{k-1})$ and $ks(G,C_k)$.  No smaller component contributes.  Hence
each component is counted correctly.
\end{proof}

\begin{lemma}\label{k1recon}
If $\Delta(G)=2$, then the number of components of $G$ that are paths with
at least $k-1$ vertices is determined by $\cD_k(G)$.
\end{lemma}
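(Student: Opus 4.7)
The plan is to derive this lemma directly from Lemma~\ref{k1count} together with Observation~\ref{k-1}. Lemma~\ref{k1count} already expresses the quantity we want as
$$
s(G,P_{k-1}) - s(G,P_k) - k\,s(G,C_k),
$$
so it suffices to argue that each of the three terms on the right is determined by $\cD_k(G)$.

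The two terms involving graphs on $k$ vertices are immediate: both $P_k$ and $C_k$ have exactly $k$ vertices, so $s(G,P_k)$ and $s(G,C_k)$ are read off from $\cD_k(G)$ simply by counting the cards isomorphic to $P_k$ and to $C_k$, respectively. For the remaining term $s(G,P_{k-1})$, the subgraph has $k-1$ vertices, so a priori it is computed from the $(k-1)$-deck. However, Observation~\ref{k-1} states that $\cD_k(G)$ determines $\cD_{k-1}(G)$, and hence determines the count $s(G,P_{k-1})$ as well.

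I would therefore write the proof as a one- or two-sentence application: invoke Lemma~\ref{k1count} to express the count in terms of $s(G,P_{k-1})$, $s(G,P_k)$, and $s(G,C_k)$; note that $s(G,P_k)$ and $s(G,C_k)$ are visible in $\cD_k(G)$; and cite Observation~\ref{k-1} for $s(G,P_{k-1})$. There is no real obstacle here, since the content of the lemma lies in Lemma~\ref{k1count}; the present statement is just the observation that every term in that formula is already encoded in $\cD_k(G)$.
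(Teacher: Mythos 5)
Your proposal is correct and follows essentially the same route as the paper: read $s(G,P_k)$ and $s(G,C_k)$ directly off the cards, obtain $s(G,P_{k-1})$ from the fact that the $k$-deck determines the $(k-1)$-deck (the paper just spells this out via the count $s(G,P_{k-1})=s(J,P_{k-1})/(n-k+1)$ rather than citing Observation~\ref{k-1}), and plug into Lemma~\ref{k1count}.
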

\begin{proof}
Each subgraph of $G$ having $k$ vertices appearances exactly once as a card
in $\cD_k(G)$.  Hence counting the cards that are paths and cycles yields
$s(G,P_k)$ and $s(G,C_k)$.  Each induced subgraph of $G$ that is a copy of
$P_{k-1}$ occurs as an induced subgraph of a $k$-card exactly $n-k+1$ times,
where $n=|V(G)|$.  Thus $s(G,P_{k-1})=s(J,P_{k-1})/(n-k+1)$, where $J$ is
the disjoint union of all the $k$-cards of $G$.  Hence we can determine all
the terms in the computation in Lemma~\ref{k1count}.
\end{proof}

\begin{lemma}\label{nopath}
Let $G$ be a graph with maximum degree $2$.  If $G$ has no path components with
at least $k-1$ vertices, and $0<s(G,P_k)\le 2k+1$, then $G$ has exactly one
component with more than $k$ vertices, and it is a cycle with $s(G,P_k)$
vertices.
\end{lemma}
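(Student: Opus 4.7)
The plan is to compute $s(G,P_k)$ explicitly in terms of the component structure of $G$, and then use the numerical bound $s(G,P_k)\le 2k+1$ together with the fact that every large cycle contributes at least $k+1$ to that count.

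First I would classify which components can contribute induced copies of $P_k$. Since $\Delta(G)=2$, each component of $G$ is a path or a cycle. By hypothesis every path component has at most $k-2$ vertices, so no path component can contain an induced $P_k$. A cycle component $C_m$ with $m\le k$ has at most $k$ vertices, and its unique $k$-vertex induced subgraph (if $m=k$) is $C_k$, not $P_k$, so such components also contribute $0$. Thus only cycles of length at least $k+1$ contribute to $s(G,P_k)$.

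Next I would count the contribution of a single long cycle. For $m\ge k+1$, any induced subgraph of $C_m$ on $k$ vertices is a disjoint union of paths; to be a single $P_k$, the $k$ chosen vertices must be consecutive along the cycle. There are exactly $m$ such choices (and when $m=k+1$ the two endpoints of a block of $k$ consecutive vertices are at distance $2$, so the resulting subgraph really is induced $P_k$). Hence $s(C_m,P_k)=m$ whenever $m\ge k+1$, and therefore
\[
s(G,P_k)=\sum_{i}m_i,
\]
where $m_1,m_2,\ldots$ are the lengths of the cycle components of $G$ with $m_i\ge k+1$.

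Finally I would apply the arithmetic. Each term in the sum is at least $k+1$, so if there were two or more cycles of length $\ge k+1$ then $s(G,P_k)\ge 2(k+1)=2k+2$, contradicting $s(G,P_k)\le 2k+1$. Combined with $s(G,P_k)>0$, this forces exactly one cycle component of length $\ge k+1$, and its length must equal $s(G,P_k)$. Since all other components have at most $k$ vertices, this is the unique component of $G$ with more than $k$ vertices, completing the proof. There isn't really a hard step here; the only place to be careful is the count $s(C_m,P_k)=m$ for $m\ge k+1$, which relies on the observation that consecutive blocks of $k$ vertices in $C_m$ are the only $k$-subsets inducing a single path.
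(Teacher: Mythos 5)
Your proof is correct and follows essentially the same route as the paper: since path components have fewer than $k-1$ vertices and short cycles contribute no induced $P_k$, only cycles of length at least $k+1$ contribute, each giving at least $k+1$ (in fact exactly $m$) copies of $P_k$, so the bounds $0<s(G,P_k)\le 2k+1$ force exactly one such cycle, of length $s(G,P_k)$. Your writeup just makes explicit the count $s(C_m,P_k)=m$ that the paper uses implicitly.
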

\begin{proof}
By hypothesis, no components are paths with more than $k$ vertices, so such
components are cycles, each contributing at least $k+1$ cards that are $P_k$.
With $s(G,P_k)\le 2k+1$, there is at most one such component.  With
$s(G,P_k)>0$, there is at least one.
\end{proof}

\begin{lemma}\label{onepath}
Let $G$ be a graph with maximum degree $2$.  If $G$ has exactly one path
component with at least $k-1$ vertices, and $0\le s(G,P_k)\le k$, then $G$ has
no cycle with more than $k$ vertices, and its one path component with at
least $k-1$ vertices has $s(G,P_k)+k-1$ vertices.
\end{lemma}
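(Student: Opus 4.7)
The plan is to compute $s(G,P_k)$ component-by-component and read off both conclusions from the resulting formula. The key book-keeping is to identify exactly which components of $G$ contribute induced copies of $P_k$. Any component with fewer than $k$ vertices contributes zero. A path component $P_\ell$ with $\ell\ge k$ contributes exactly $\ell-k+1$ consecutive choices; the boundary case $\ell=k-1$ contributes $0$. A cycle component $C_m$ with $m>k$ contributes exactly $m$ copies, one for each choice of the omitted arc of length $m-k$; and $C_k$ contributes $0$ because its only $k$-vertex induced subgraph is itself. These are the only contributions since $\Delta(G)=2$ forces every component to be a path or a cycle.

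With this dictionary in hand, I would first rule out large cycles. If $G$ had a cycle component $C_m$ with $m>k$, then that component alone would contribute $m\ge k+1$ to $s(G,P_k)$, exceeding the hypothesized bound $s(G,P_k)\le k$. So $G$ has no cycle with more than $k$ vertices, proving the first conclusion.

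Having eliminated large cycles and knowing that small cycles, $C_k$, and short paths all contribute nothing, the only remaining contributor is the unique path component guaranteed by hypothesis to have at least $k-1$ vertices; call its length $\ell$. Then $s(G,P_k)=\max\{\ell-k+1,0\}$, and in either the case $\ell=k-1$ (contribution $0$) or $\ell\ge k$ (contribution $\ell-k+1$) the identity $\ell=s(G,P_k)+k-1$ holds, which gives the second conclusion.

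There is no real obstacle here; the argument is a counting identity once the contributions of each component type to $s(\cdot,P_k)$ are correctly itemized. The only subtlety worth flagging is the boundary $\ell=k-1$, where the path component contributes no $P_k$ and yet is still the ``one path component with at least $k-1$ vertices'' in the statement; writing the formula as $\max\{\ell-k+1,0\}$ handles this uniformly.
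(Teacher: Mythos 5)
Your proposal is correct and follows essentially the same route as the paper: bound out cycles longer than $k$ because each would contribute at least $k+1$ induced copies of $P_k$, note that $C_k$ and short components contribute none, and then use $s(P_\ell,P_k)=\ell-k+1$ for $\ell\ge k-1$ to read off the length of the unique long path component. Your explicit handling of the boundary case $\ell=k-1$ is a fine (if minor) elaboration of the same counting identity.
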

\begin{proof}
Since $s(G,P_k)\le k$, no component is a cycle with more than $k$ vertices.
Since $s(C_k,P_k)=0$, all copies of $P_k$ come from paths, of which by
hypothesis there is only one.  Now $s(P_m,P_k)=m-k+1$ for $m\ge k-1$ completes
the proof.
\end{proof}

In order to use the lemmas above to prove the upper bounds, we need to
determine from $\cD_k(G)$ that $G$ has maximum degree $2$.  When $k\ge4$,
this follows from Manvel's result, but we will need it also sometimes
when $k=3$.  The cases in the next lemma will suffice.

\begin{lemma}\label{deg2}
If $\Delta(G)=2$, then every reconstruction from $\cD_3(G)$ has maximum 
degree $2$ in the following cases: $G$ has no isolated vertices,
$G=P_4+aP_1$ with $a\ge0$, or $G=aP_3+bC_3+cP_2+dP_1$ with $\min\{b,d\}\le3$
and $a\le1$.
When $G$ has an isolated vertex, there are alternative reconstructions 
with maximum degree $3$ in the following cases: $G$ has a component with at
least five vertices, or a $4$-cycle, or three components forming $P_4+C_3+P_1$,
or eight components forming $4C_3+4P_1$.  Let $\cF$ denote the family of 
such graphs $G$.
\end{lemma}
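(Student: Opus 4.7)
The plan is to read off the four invariants that $\cD_3(G)$ determines — namely $n = |V(G)|$, $m = |E(G)|$, $s(G, P_3)$, and $s(G, K_3)$ — and use the identity $\sum_v \binom{d(v)}{2} = s(G, P_3) + 3s(G, K_3)$ to conclude that, since $\Delta(G) = 2$, the triple $(n_0, n_1, n_2)$ of degree multiplicities is determined. For any reconstruction $G'$ with degree-count vector $(n_i')$, comparing the three moment equations with those of $G$ yields
\[
n_2' = n_2 - \sigma_2, \qquad n_1' = n_1 - \sigma_1 + 2\sigma_2, \qquad n_0' = n_0 - (s - \sigma_1 + \sigma_2),
\]
where $s = \sum_{i \ge 3} n_i'$, $\sigma_1 = \sum_{i \ge 3} i\, n_i'$, and $\sigma_2 = \sum_{i \ge 3} \binom{i}{2} n_i'$. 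The key identity $s - \sigma_1 + \sigma_2 = \sum_{i \ge 3} \binom{i-1}{2} n_i' \ge s$, tight only when every excess degree equals exactly $3$, drives the whole argument.

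The cases $n_0 = 0$ and $G = P_4 + aP_1$ follow immediately: in the first, $n_0' \ge 0$ combined with the key inequality forces $s = 0$; in the second, $n_2 = 2 < 3 \le \sigma_2$ would occur whenever any vertex of degree $\ge 3$ is present. The third safe case, $G = aP_3 + bC_3 + cP_2 + dP_1$ with $a \le 1$ and $\min\{b,d\} \le 3$, is the heart of the proof. I first argue that a single degree-$3$ vertex $v$ of $G'$ (the case $s = 1$) must have $0$ or $1$ edges among its three neighbors, since two adjacent neighbors would force a second degree-$\ge 3$ vertex; in either configuration $v$ is the center of at least two induced $P_3$'s, violating $s(G', P_3) \le 1$. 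A parallel local count rules out components isomorphic to $K_{1,3}$, the paw, or $K_4 - e$, each of which also produces at least two induced $P_3$'s. Thus every degree-$\ge 3$ vertex of $G'$ must lie inside a $K_4$ component, and the remaining max-degree-$2$ part cannot contain a cycle or path of at least $4$ vertices (which would overflow the $P_3$ budget $a \le 1$). A global vertex/edge count then shows that each $K_4$ in $G'$ comes in exchange for four $C_3$'s and four $P_1$'s in $G$ via the substitution $4C_3 + 4P_1 \leftrightarrow K_4 + 6K_2$, forcing $b, d \ge 4$ and contradicting $\min\{b,d\} \le 3$.

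For the bad cases I use Lemma~\ref{samedeck} to reduce to the flagged components together with one $P_1$, then exhibit an explicit $G'$ of maximum degree $3$. For $H + P_1$ with $H \cong P_m$ and $m \ge 5$, take the tree on $m$ vertices having one degree-$3$ center and three paths whose edge-lengths sum to $m-2$, adjoined with a $K_2$. For $H \cong C_m$ with $m \ge 5$, take the analogous tree on $m+1$ vertices with legs summing to $m$ edges. For $C_4 + P_1$, take the tree with legs $(1,1,2)$. For $P_4 + C_3 + P_1$, take the paw plus two $K_2$'s. For $4C_3 + 4P_1$, take $K_4 + 6K_2$. In each case, verifying that the four invariants $(n, m, s(P_3), s(K_3))$ match suffices, since together with the total count $\binom{n}{3}$ they pin down all four multiplicities in the $3$-deck.

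The principal obstacle is the third safe case: the moment inequality alone is not decisive, so I must carefully enumerate every degree-$\ge 3$ local configuration, eliminate all of them by local $P_3$-counting except those realized inside a $K_4$, and then translate "only $K_4$ components survive" into the quantitative arithmetic $b, d \ge 4$ via the $4C_3 + 4P_1 \leftrightarrow K_4 + 6K_2$ exchange.
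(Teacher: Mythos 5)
Most of your outline is sound and close to the paper's: your moment identities are exactly the convexity argument used for the no-isolated-vertex case; your alternative reconstructions (spider trees with three leaves plus $K_2$, the paw plus $2K_2$, $K_4+6K_2$) are the same ones the paper exhibits, and your observation that matching $(n,|E(G)|,s(G,P_3),s(G,K_3))$ pins down the whole $3$-deck is a valid verification criterion; your treatment of $P_4+aP_1$ (incidence count $2<3\le\binom{d}{2}$ for any $d\ge3$) is even cleaner than the paper's enumeration of three-edge graphs. (Two small slips: the spider replacing $P_m+P_1$ should have $m-1$ vertices, not $m$, and your ``single degree-$3$ vertex'' local count should also cover degree $\ge 4$.)

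The genuine gap is in the case $G=aP_3+bC_3+cP_2+dP_1$, at the step ``thus every degree-$\ge 3$ vertex of $G'$ must lie inside a $K_4$ component.'' Ruling out $K_{1,3}$, the paw, and $K_4-e$ is not an exhaustive analysis of what a component containing a high-degree vertex can be, and---more seriously---$P_3$-counting cannot exclude larger cliques: $K_5,K_6,\ldots$ contain no induced $P_3$ at all, so nothing in your local budget argument prevents them, yet your concluding exchange arithmetic ($4C_3+4P_1\leftrightarrow K_4+6K_2$, giving $b,d\ge4$) is carried out only for $K_4$. What is needed (and what the paper does) is a structural step: if $a=1$, first argue that the unique $P_3$ card forces every reconstruction to contain a $P_3$ component (the only connected graph with exactly one induced $P_3$ is $P_3$) and strip it with Lemma~\ref{samedeck}; after that the $3$-deck contains no $P_3$, so every reconstruction is a disjoint union of complete graphs. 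Then the count must be done for a general component $K_m$ with $m\ge4$: it carries $\binom{m}{3}\ge4$ triangles but only $\binom{m}{2}$ edges, and comparing edges and vertices with $G$ forces $G$ to contain at least four triangle components and at least four isolated vertices (in the paper's form, at least $3\binom{m}{3}-m(m-2)\ge 4$ of them), contradicting $\min\{b,d\}\le3$. Without the ``union of cliques'' step and the general-$m$ count, your argument does not close this case.
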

\begin{proof}
We first exhibit the alternative reconstructions for $G\in\cF$.  Let $Y_r$ be
any tree with $r$ vertices and three leaves.  Note that $\Delta(Y_r)=3$.

For $m\ge4$, the graph $C_m+P_1$ has the same $3$-deck as $Y_{m+1}$.  The
$3$-deck has no triangles, $m$ copies of $P_3$, and $m(m-4)$ copies of
$P_2+P_1$, with the other cards being $3P_1$.

For $m\ge5$, the graph $P_m+P_1$ has the same $3$-deck as $Y_{m-1}+P_2$.  The
$3$-deck has no triangles, $m-2$ copies of $P_3$, and $(m-2)^2+1$ copies of
$P_2+P_1$; the other cards are $3P_1$.

In addition, ${\cD_3(P_4+C_3+P_1)}={\cD_3(K_{1,3}^++2P_2)}$, where $K_{1,3}^+$
is the ``paw'', obtained from $K_{1,3}$ by adding one edge (the $3$-deck has
one triangle, two copies of $P_3$, and $29$ copies of $P_2+P_1$).  Also,
${\cD_3(4C_3+4P_1)}={\cD_3(K_4+6P_2)}$ (the $3$-deck has four triangles, no
copies of $P_3$, and $156$ copies of $P_2+P_1$).

For the remaining cases, let $H$ be a reconstruction from $\cD_3(G)$.  We know
$|V(H)|$ from $\cD_1(G)$ (call it $n$) and $|E(H)|$ from $\cD_2(G)$.  Also
$\cD_3(G)$ tells us the number of incidences between edges, which equals
$\sum_{v\in V(H)}\CH{d_H(v)}2$.  If $G$ has no isolated vertices, then $G$ has
$n-t/2$ edges and $n-t$ incidences, where $t$ is the number of vertices of
degree $1$.  Among all lists $\VEC d1n$ of nonnegative integers summing to
$2n-t$, by convexity $\sum \CH{d_i}2$ is minimized (and equals $n-t/2$)
precisely when all entries are $1$ or $2$.  Hence in this case we know the
maximum degree (and degree list) of $H$.

When $G=P_4+aP_1$, every reconstruction $H$ from $\cD_3(G)$ has three edges.
Thus $H$ consists of $P_4$, $K_{1,3}$, $C_3$, $P_3+P_2$, or $3P_2$ plus
isolated vertices.  Among these, only $G$ has exactly two copies of $P_3$ in
its $3$-deck.

It remains to consider $G=aP_3+bC_3+cP_2+dP_1$ with $a\le1$.
If $a=1$, then $\cD_3(G)$ has exactly one copy of $P_3$.  Being connected,
it comes from one component of $H$, and the only connected graph with exactly
one copy of $P_3$ in its $3$-deck is $P_3$.  Hence $H$ has $P_3$ as one
component.  By Lemma~\ref{samedeck}, we therefore need only consider
$G=bC_3+cP_2+dP_1$.  Let $H$ be an alternative reconstruction from the $3$-deck
of a minimal such graph $G$.  By Lemma~\ref{samedeck}, each graph is a
component in at most one of $G$ and $H$.

Since $P_3$ is not a $3$-card, $H$ is a disjoint union of complete graphs.
When $b>0$, we have that $C_3$ is not a component of $H$.  Hence $b$ counts
the triangles in the components of $H$ with more than three vertices.  In $G$,
we have three edges per triangle.  In $H$ the components generating triangles
have fewer than three edges per triangle.  Hence $H$ has isolated edges, and
$G$ does not.  A copy of $K_m$ in $H$ with $m>3$ uses $\CH m2$ edges to
generate $\CH m3$ triangles, which in $G$ use $3\CH m3$ edges.  Hence $H$ has
$3\CH m3-\CH m2$ isolated edges for each such component.  Associated with each
such component in $H$, we thus have $m+6\CH m3-2\CH m2$ vertices in $H$ and
$3\CH m3$ edges in $G$.  This requires at least $3\CH m3-m(m-2)$ isolated
vertices in $G$.  If $\Delta(H)\ne2$, then $H$ has a component with $m\ge4$,
which requires that $G$ has at least four isolated vertices and at least four
components that are triangles.

Finally, if $G=cP_2+dP_1$, then we know $G$ is reconstructible from $\cD_3(G)$.
\end{proof}

These exceptions in Lemma~\ref{deg2} will yield exceptions to the general
formula we now define.

\begin{definition}\label{kG}
Given a graph $G$ with $n$ vertices and maximum degree at most $2$, let $m$ and
$m'$ be the numbers of vertices in two largest components of $G$, with
$m\ge m'$ (possibly $m'=0$).  Let $\epsilon=1$ if $G$ has $P_m$ as a component;
otherwise $\epsilon=0$.
Let $\epsilon'=2$ if $m'<m-1$ and $G$ has $P_{m'}$ as a component.
Let $\epsilon'=1$ if $m'=m-1$ and $G$ has $P_{m'}$ as a component,
if $m'<m$ and $G$ has $P_{m'-1}$ but not $P_{m'}$ as a component,
or if $m'=m$ and at least two components of $G$ equal $P_m$.
Otherwise, let $\epsilon'=0$.  Now define
\begin{equation*}
k_G=\max\{\FL{m/2}+\epsilon,m'+\epsilon'\}.\tag{*}
\end{equation*}
\end{definition}

Now we can determine $\rho(G)$.

\begin{theorem}\label{rho}
Let $G$ be a graph with $n$ vertices and maximum degree at most $2$,
using notation $m,m',\epsilon,\epsilon',k_G$ as in Definition~\ref{kG}.
Always $\rho(G)=k_G$, except that $\rho(G)=4$ when $k_G=3$ and $G\in\cF$.
\end{theorem}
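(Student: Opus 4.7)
The proof of Theorem~\ref{rho} splits into a lower bound $\rho(G)\ge k_G$ and an upper bound $\rho(G)\le k_G$, together with handling the exceptional family $\cF$ on which $\rho(G)=4$.

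For the lower bound, the plan is to exhibit, for each $G$, a nonisomorphic $G'$ with $\cD_{k_G-1}(G)=\cD_{k_G-1}(G')$. The tools are the three transformations (1), (2), (3) from Theorem~\ref{main}, together with Lemma~\ref{samedeck}, which freezes all other components. I would split by which term realizes the maximum in $k_G$. When $k_G=\FL{m/2}+\epsilon$, only the largest component is modified: apply (1) to split $C_m$ into $C_{\CL{m/2}}+C_{\FL{m/2}}$ when $\epsilon=0$, or apply (2) to replace $P_m$ by $C_{\CL{m/2}+1}+P_{\FL{m/2}-1}$ when $\epsilon=1$. When $k_G=m'+\epsilon'$, the two largest components are jointly modified, and the three clauses defining $\epsilon'$ each correspond to a specific transformation; for example, $\epsilon'=1$ arising because $G$ has two components equal to $P_m$ is matched by (3) at $q=r=m$, giving $\cD_m(P_m+P_m)=\cD_m(P_{m+1}+P_{m-1})$. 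I would verify each such clause separately and check that no transformation remains available at the larger value $k_G$.

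For the upper bound, the key observation is that $k_G\ge m'+\epsilon'\ge m'$, so at most one component of $G$ has more than $k_G$ vertices, and if it exists it is the largest. By Lemma~\ref{kplus} it suffices to identify this possibly-large component from $\cD_{k_G}(G)$. I would first argue $\Delta(H)=2$ for any reconstruction $H$: by Theorem~\ref{manvel} when $k_G\ge 4$, and by the relevant cases of Lemma~\ref{deg2} when $k_G=3$ and $G\notin\cF$. Next, Lemma~\ref{k1recon} supplies the number of path components with at least $k_G-1$ vertices. According to whether this count is $0$, $1$, or at least $2$, I would apply Lemma~\ref{nopath} (to identify a unique large cycle), Lemma~\ref{onepath} (to identify a unique long path after first ruling out large cycles), or observe that no component exceeds $k_G$ vertices and read the rest off from the $k_G$-deck. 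For $G\in\cF$ with $k_G=3$, Lemma~\ref{deg2} already gives $\rho(G)\ge 4$, and the $k=4$ case of the same upper-bound argument yields $\rho(G)\le 4$.

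The main obstacle will be verifying, in each configuration allowed by Definition~\ref{kG}, that the count $s(G,P_{k_G})$ falls within the range required by Lemma~\ref{nopath} (at most $2k_G+1$) or Lemma~\ref{onepath} (at most $k_G$). Since $\epsilon$ and $\epsilon'$ are tuned precisely to the reconstructibility threshold, the inequalities are tight, so every sub-case of $\epsilon'\in\{0,1,2\}$ must be checked in tandem with the possible shape (cycle versus path) of the largest component. Borderline cases where $m$ and $m'$ are close — so that both terms inside the maximum in $k_G$ nearly coincide — will require the most care, but no single check should involve anything more subtle than the lemmas already at our disposal.
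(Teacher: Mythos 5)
Your plan follows the paper's own proof essentially step for step: the lower bound via transformations (1)--(3) together with Lemma~\ref{samedeck} and the exceptional $3$-deck reconstructions of Lemma~\ref{deg2}, and the upper bound via Theorem~\ref{manvel} or Lemma~\ref{deg2} to force $\Delta(H)=2$, then Lemma~\ref{kplus}, Lemma~\ref{k1recon}, and Lemmas~\ref{nopath} and~\ref{onepath}, with exactly the tight inequality checks ($m\le 2k+1$ when $\epsilon=0$, $m\le 2k-1$ when $\epsilon=1$) that the paper performs. Your reorganization of the upper-bound case analysis by the deck-determined count ($0$, $1$, or at least $2$) of path components with at least $k_G-1$ vertices is only a cosmetic variant of the paper's split by the structure of the two largest components, so this is the same approach and it is correct.
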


\begin{proof}
{\it Lower bounds.}
We first use facts (1),(2),(3) listed after Theorem~\ref{main}.  When we
provide another graph having the same $k$-deck, we obtain $\rho(G)>k$.

Consider first a largest component, and let $k=\FL{m/2}+\epsilon-1$.

\medskip
\qquad
(1) yields $\cD_k(C_m)=\cD_k(C_{\CL{m/2}}+C_{\FL{m/2}})$ when $k<\FL{m/2}$, and

\qquad
(2) yields $\cD_k(P_m)=\cD_k(C_{\CL{m/2}+1}+P_{\FL{m/2}-1})$ when
$k\le \FL{m/2}$.
\medskip

\noindent
Combined with Lemma~\ref{lem: two components}, we obtain
$\rho(G)\ge\FL{m/2}+\epsilon$.

Now consider two large components, and let $k=m'+\epsilon'-1$.  
Suppose first that $G$ has $P_q$ as a component, where $q\in\{m',m'-1\}$.

\medskip
\qquad
(2) yields $\cD_k(C_m+P_{q})=\cD_k(P_{m+q})$ when $k< m$ and $k\le q+1$,

\qquad
(3) yields $\cD_k(P_m+P_{q})=\cD_k(P_{m-1}+P_{q+1})$ when $k\le\min\{m,q+1\}$.

\medskip
\noindent
Depending on whether the unique largest component of $G$ is a path,
these observations yield $\rho(G)\ge m'+\epsilon'$ in these cases:
$\epsilon'=2$ (using $q=m'$), and $\epsilon'=1$ (when $m'<m$ using
$q=m'-1$ or $m'=m-1$ using $q=m'$, and when $m'=m$ using $q=m'=m$).

When $\epsilon'=0$, every component with $m'$ vertices is a cycle, except
when $G$ contains $C_m+P_m$.  This we can also write as $P_m+C_{m'}$, since
then $m'=m$.  Let $k=m'-1$.  Now

\qquad
(1) yields $\cD_k(C_m+C_{m'})=\cD_k(C_{m+m'})$ when $k<m'\le m$,

\qquad
(2) yields $\cD_k(P_m+C_{m'})=\cD_k(P_{m'+m})$ when $k<m'$ (since also
$k\le m+1$).

\medskip
\noindent
Combined with Lemma~\ref{lem: two components}, we obtain
$\rho(G)\ge m'+\epsilon'$ in each case.

\medskip
Thus $\rho(G)\ge k_G$.  When $k_G=3$ and $G\in\cF$, the alternative 
reconstructions in Lemma~\ref{deg2} show that $\rho(G)\ge4$.

\bigskip
{\it Upper bounds.}
If $|E(G)|\le1$, then $k_G=2$, and indeed $G$ is determined by its $2$-deck.
If $|E(G)|\ge2$ and $\Delta(G)=1$, then $k_G=3$, and by Manvel's result
$\cD_3(G)$ determines the degree list, which in turn determines $G$.  In all
other cases, $\Delta(G)=2$ and $k_G\ge3$.
If $k_G=3$ and $G$ has an isolated vertex with $m\ge4$ (except $P_4+aP_1$)
or with $G$ containing $4C_3+4K_1$, then set $k=4$.  Otherwise, set 
set $k=k_G$.

When $k_G=3$ and $G\notin\cF$, every reconstruction from $\cD_3(G)$ has maximum
degree $2$, by Lemma~\ref{deg2}.  In all other cases, $k\ge4$ and
Manvel's result implies that every reconstruction has maximum degree $2$.
This fact is all we need for the main argument.

By Lemma~\ref{kplus}, it suffices to show that $\cD_k(G)$ determines the
components of $G$ with more than $k$ vertices.  Since $k\ge k_G$, we have
$k\ge\FL{m/2}+\epsilon$ and $k\ge m'+\epsilon'$.
The key claim that allows us to apply the lemmas is this:  

\medskip
{\narrower

\noindent
Claim: If $k\ge m'+\epsilon'$ and $m'<m-1$ (or $m'=m-1$ and $G$ does not have
$P_{m'}$ as a component), then at most one path component has at least $k-1$
vertices.

}

\medskip
\noindent
We check cases.  If $\epsilon'=2$, then $G$ has $P_{m'}$ as a component and at
most one component with more vertices, which suffices since $m'<k-1$.
If $\epsilon'=1$ and $G$ has $P_{m'-1}$ but not $P_{m'}$ as a component, then
at most one component that is a path has at least $m'$ vertices, which
suffices since $m'\le k-1$.  If $m'=m-1$ and $G$ does not
have $P_{m'}$ or $P_{m'-1}$ as a component, then $\epsilon'=0$ and 
$G$ has at most one path component with at least $k-1$ vertices.
The claim applies to all cases with $m'<m$ except when $m'=m-1$ and 
$G$ has $P_{m'}$ as a component.

For all these cases, $G$ has at most one path component having at least $k-1$
vertices.  Since $\epsilon'=2$ only when $m'<m-1$, whenever $m'<m$ we also
have $k\le m$.  Hence there is one such path component if $P_m$ is a component,
in which case $s(G,P_k)=m-k+1$, and there are none if $C_m$ is a component and
$P_m$ is not, in which case $s(G,P_k)=m$.

Now consider a reconstruction $H$ from $\cD_k(G)$.  By Lemma~\ref{k1recon}, the
number of components of $H$ that are paths with at least $k-1$ vertices is the
same as in $G$.  Furthermore, $s(H,P_k)=s(G,P_k)$; this just counts the
$k$-cards isomorphic to $P_k$.

When $G$ has no components that are paths with at least $k-1$ vertices,
cards that are paths arise only from cycles with more than $k$ vertices.
In particular, since $k\ge m'$, no such cards arise from $m'$-cycles, and
$m=s(G,P_k)=s(H,P_k)$.  Since
$k\ge \FL{m/2}+\epsilon$ and here $\epsilon=0$, we have $m\le 2k+1$.  Now
Lemma~\ref{nopath} implies that $H$ has exactly one
component with more than $k$ vertices, and it is $C_m$.

When $G$ has exactly one component that is a path with at least $k-1$ vertices,
and it is $P_m$, the same holds for $H$.  Again $k\ge m'$ implies that no
copies of $P_k$ areise from $m'$-cycles, so $m-k+1=s(G,P_k)=s(H,P_k)$.
$k\ge \FL{m/2}+\epsilon\le m'+\epsilon'$ and $\epsilon=1$, we have $m\le 2k-1$,
and hence $m-k+1\le k$.  Now Lemma~\ref{onepath} implies that $H$ has exactly
one component with more than $k$ vertices, and it is $P_m$.

In each case above the components of $H$ having more than $k$ vertices are the
same as in $G$, which suffices.  In the remaining cases we show that both have
no such components.  These cases are when $m'=m$ or when $m'=m-1$ with $P_{m'}$
being a component of $G$.

If $G$ has at least two components isomorphic to $P_m$, then $\epsilon'=1$ and
$k=m+1$.  Since no component of $G$ has at least $k$ vertices, no card is
connected; hence $H$ has no component with at least $k$ vertices.
Otherwise, $k=m$.  Since $G$ has no component with more than $k$ vertices, at
most one $k$-card is $P_k$.  Thus $s(H,P_k)\le 1$.  Since $\Delta(H)=2$, we
again conclude that $H$ has no component with more than $k$ vertices.
\end{proof}


\end{document}